\newtheorem{theorem}{Theorem}[section]
\newtheorem{lemma}{Lemma}[section]
\newtheorem{corollary}{Corollary}[section]
\newtheorem{definition}{Definition}[section]
\theoremstyle{remark}
\newtheorem{remark}{Remark}[section]
\DeclareRobustCommand\widecheck[1]{{\mathpalette\@widecheck{#1}}}
\def\@widecheck#1#2{%
    \setbox\z@\hbox{\m@th$#1#2$}%
    \setbox\tw@\hbox{\m@th$#1%
       \widehat{%
          \vrule\@width\z@\@height\ht\z@
          \vrule\@height\z@\@width\wd\z@}$}%
    \dp\tw@-\ht\z@
    \@tempdima\ht\z@ \advance\@tempdima2\ht\tw@ \divide\@tempdima\thr@@
    \setbox\tw@\hbox{%
       \raise\@tempdima\hbox{\scalebox{1}[-1]{\lower\@tempdima\box
\tw@}}}%
    {\ooalign{\box\tw@ \cr \box\z@}}}
\DeclareSymbolFont{extra}{OML}{cmm}{m}{n}
\DeclareMathSymbol{\varrho}{\mathord}{extra}{'045}
\newcommand{\e}{^\varepsilon}
\newcommand{\eps}{{\varepsilon}}
\newcommand{\ds}{\displaystyle}
\newcommand{\I}{\mathcal{I}\e}
\renewcommand{\>}{\rangle}
\renewcommand{\a}{\alpha}
\renewcommand{\b}{\beta}
\newcommand{\cupl}{\bigcup\limits}
\newcommand{\supp}{\mathrm{supp}}
\newcommand{\suml}{\sum\limits}
\newcommand{\intl}{\int\limits}
\newcommand{\liml}{\lim\limits}
\renewcommand{\phi}{\varphi}
\newcommand{\checkD}{\widecheck{D}{_i\e}}
\newcommand{\hatD}{\widehat{D}{_i\e}}
\renewcommand{\d}{\hspace{1pt}\mathrm{d}}
\numberwithin{equation}{section}
\begin{document}

\begin{frontmatter}

\title{Neumann spectral problem in a domain with very corrugated boundary}

\author[a1]{Giuseppe Cardone}
\ead{giuseppe.cardone@unisannio.it}
\address[a1]{Department of Engineering, University of Sannio, Corso Garibaldi 107, 82100 Benevento, Italy}
\cortext[cor1]{Corresponding author}

\author[a2]{Andrii Khrabustovskyi\corref{cor1}}
\ead{andrii.khrabustovskyi@kit.edu}
\address[a2]{Department of Mathematics, Karlsruhe Institute of Technology, Kaiserstra{\ss}e 89-93, 76133 Karlsruhe, Germany}

\begin{abstract}

Let $\Omega\subset\mathbb{R}^n$ be a bounded domain. We perturb it to a domain $\Omega^\varepsilon$ attaching a family of small protuberances with "room-and-passage"-like geometry ($\varepsilon>0$ is a small parameter). Peculiar spectral properties of Neumann problems in so perturbed domains were observed for the first time by R.~Courant and D.~Hilbert. We study the case, when the number of protuberances tends to infinity as $\varepsilon\to 0$ and they are $\varepsilon$-periodically distributed along a part of $\partial\Omega$. Our goal is to describe the behaviour of the spectrum of the operator $\mathcal{A}^\varepsilon=-(\rho^\varepsilon)^{-1}\Delta_{\Omega^\varepsilon}$, where $\Delta_{\Omega^\varepsilon}$ is the Neumann Laplacian in $\Omega^\varepsilon$, and the positive function $\rho^\varepsilon$ is equal to $1$ in $\Omega$. We prove that the spectrum of $\mathcal{A}^\varepsilon$ converges as $\varepsilon\to 0$ to the "spectrum" of a certain boundary value problem for the Neumann Laplacian in $\Omega$ with boundary conditions containing the spectral parameter in a nonlinear manner. Its eigenvalues may accumulate to a finite point.

\end{abstract}

\begin{keyword}
perturbed domain\sep perturbed mass density\sep Neumann Laplacian\sep spectrum\sep Hausdorff convergence\sep $\lambda$-dependent boundary conditions\sep boundary homogenization
\end{keyword}

\journal{a journal}
\end{frontmatter}

\section{Introduction}

Let $\Omega$ be a fixed domain in $\mathbb{R}^n$. We perturb it to a family of 
domains $\{\Omega\e\subset\mathbb{R}^n\}_{\eps}$, here $\eps>0$ is a small 
parameter.
{\color{black}It is well known that if the perturbation is regular enough 
(see, e.g., \citep[Chapter~VI, \S~2.6]{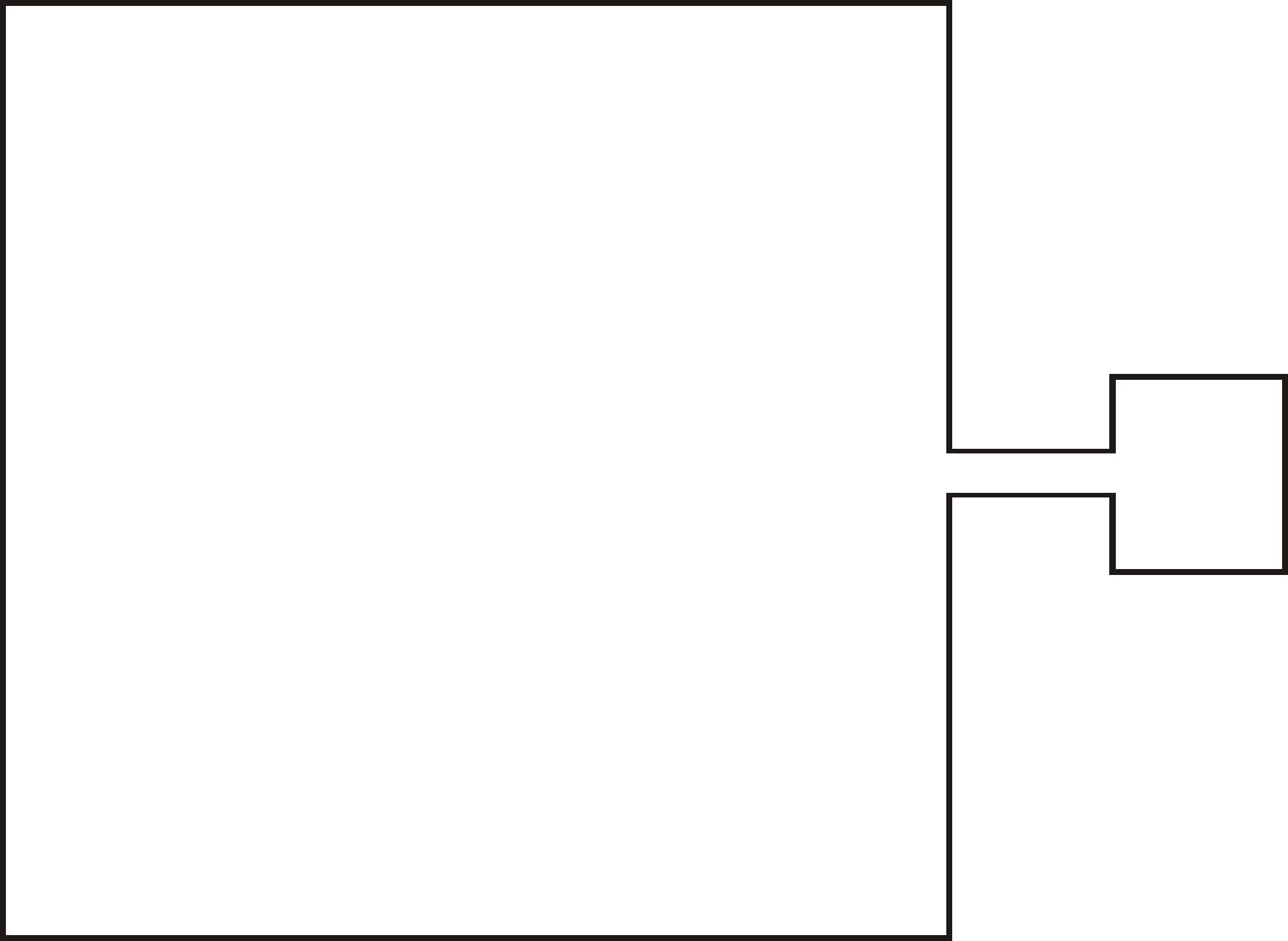} for more precise 
statement), then the $k$-th eigenvalue of the Neumann Laplacian in $\Omega\e$ 
converges to the $k$-th eigenvalue of the Neumann Laplacian in $\Omega$ (the 
same also true for the Dirichlet or mixed boundary conditions). 
In general, however, this is not true -- 
even if $\Omega\e$ differs from $\Omega$ only in a ball of the radius 
$\mathcal{O}(\eps)$
as it is 
evident from the following example going back to \textit{R.~Courant and  
\textcolor{black}{D.~Hilbert}}.}
Let $\Omega$ be a unit square $\mathbb{R}^2$.
We perturb $\Omega$ to a domain $\Omega\e$ attaching to it a small domain, which consists of a square $B\e$ ("room") with a side length $b\e$ and a narrow rectangle $T\e$ ("passage") with side lengths $d\e$ and $h\e$ --- see Fig. \ref{fig0} (left picture):
$$\Omega\e=\Omega\cup\left(B\e\cup T\e\right).$$
We denote by $\Delta_{\Omega}$ and $\Delta_{\Omega\e}$ the Neumann Laplacians in $\Omega$ and $\Omega\e$, correspondingly. The first eigenvalues of both $-\Delta_{\Omega}$ and $-\Delta_{\Omega\e}$ are zero. The second eigenvalue of $-\Delta_{\Omega}$ is strictly positive, while it was shown in \citep[Chapter~VI, \S~2.6]{CH} that the second eigenvalue of $-\Delta_{\Omega\e}$ tends to zero as $\eps\to 0$ provided $d\e=\eps^4$, $b\e=h\e=\eps$.  

\begin{figure}[h]\large
\begin{center}
\begin{picture}(320,110)
\scalebox{0.095}{ \includegraphics{CH}}\qquad\qquad\qquad
\scalebox{0.300}{ \includegraphics{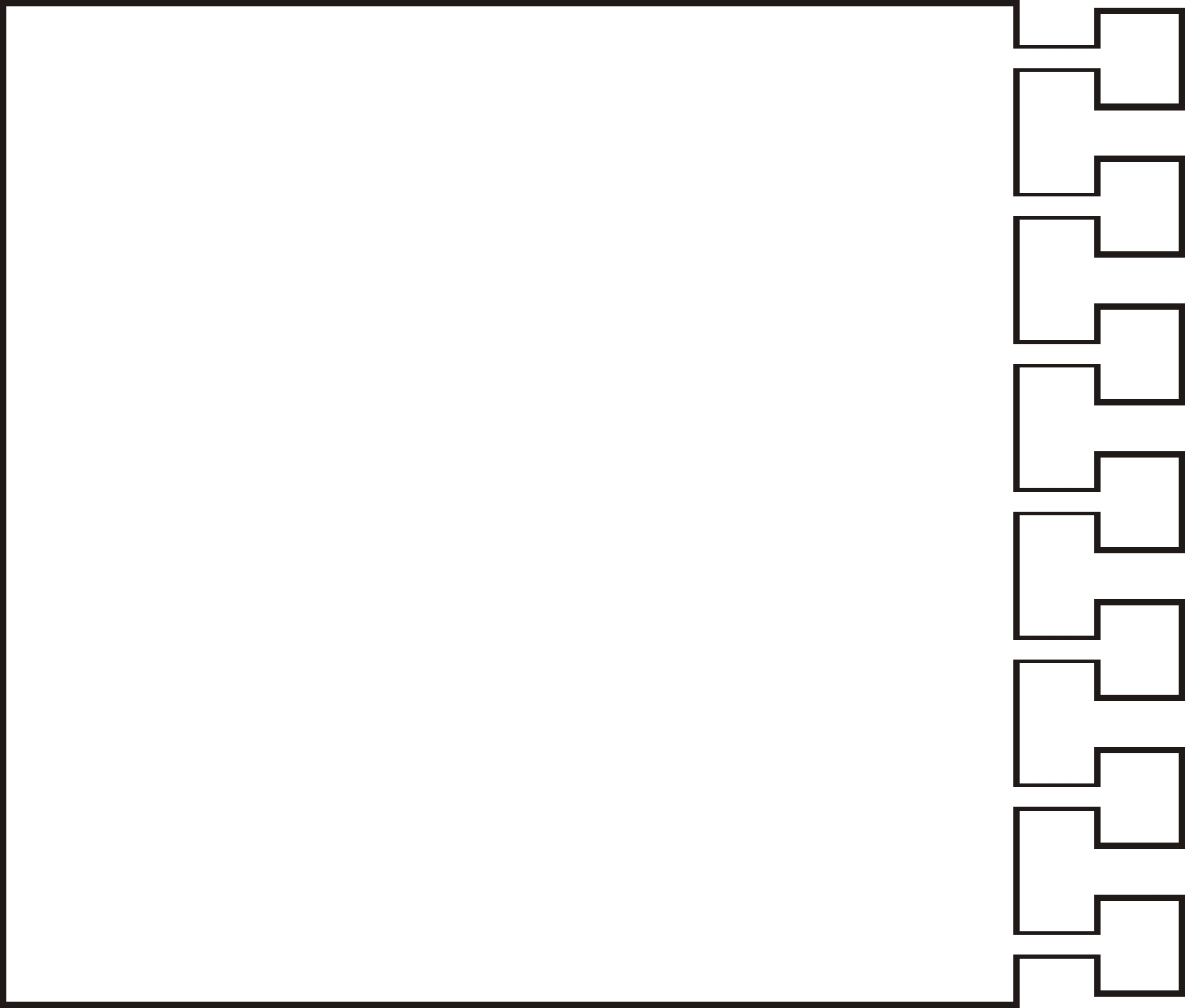}}

\put(-300, 18){$\Omega$} 

\put(-223, 75){$T\e$} \put(-221,71){\vector(-1,-4){5}}

\put(-210, 47){$B\e$} 

\put(-250,45){$^{d\e}$}  \put(-240,39){\vector(0,1){10}} \put(-240,63){\vector(0,-1){10}}

\put(-184,45){$^{b\e}$} \put(-187,50){\vector(0,1){12}} \put(-187,50){\vector(0,-1){10}} 

\put(-226,17){$^{h\e}$} \put(-226,32){\vector(1,0){12}} \put(-220,32){\vector(-1,0){11}}

\end{picture}
\end{center}

\caption{Domains with attached "room-and-passage"s}\label{fig0}
\end{figure}

Later this example was studied in more detail by \textit{J.M.~Arrieta, 
\textcolor{black}{J.K.~Hale} and Q.~Han} \citep{AHH} for more general geometry of 
"rooms" and "passages". 
Taking almost the same 
ratios between the "room" diameter, the "passage"
cross-section diameter and the "passage" length as those ones in \citep{CH} they proved that for $k\geq 2$ the $k$-th eigenvalue $\lambda_k\e$ of $-\Delta_{\Omega\e}$ converges to the $(k-1)$-th eigenvalue $\lambda_{k-1}$ of $-\Delta_{\Omega}$ as $\eps\to 0$. Also they generalized this result to the case of 
\textit{finitely} many attached "room-and-passage"-like domains proving that  
$\liml_{\eps\to 0}\lambda_k\e=0$ as $k=2,\dots,r+1$ and $\liml_{\eps\to 0}\lambda_k\e=\lambda_{k-r}$ as $k\geq r+2$ (here $r\in\mathbb{N}$ is the number of attached domains).

\textit{E.~Sanchez-Palencia} in his book \citep{Sanchez} (see Chapter XII, \S 4]) considered the case, when $\Omega\e$ is obtained by attaching \textit{several} "room-and-passage"-like domains, whose number goes to infinity as $\eps\to 0$ (Fig.~\ref{fig0}, right picture). He considered the "rooms" and "passages" of the same size as those ones in \citep{CH} and proved that 
\begin{gather}\label{hb+}
\text{for any }\lambda\in
\sigma(-\Delta_{\Omega})\text{ there exists }\lambda\e\in\sigma(-\Delta_{\Omega\e})\text{ such
that }\liml_{\eps\to 0}\lambda\e=\lambda. 
\end{gather}
\textcolor{black}{Hereinafter} by $\sigma(\cdot)$ we denote the spectrum of an 
operator.
Also he shown that, similarly to the case of one attached "room-and-passage"-like domain, the second eigenvalue of $-\Delta_{\Omega\e}$ goes to zero.\smallskip

\textit{The goal of the present work} is to extend the results obtained in \citep{Sanchez} 
considering various cases of sizes of "rooms" and "passages" and perturbing mass density in the "rooms". Below we sketch the main results of this work.\smallskip

Let $\Omega$ be a bounded domain in $\mathbb{R}^n$ ($n\geq 2$). It is supposed that the part of $\partial\Omega$ belongs to a $(n-1)$-dimensional hyperplane. We denote this part of $\partial\Omega$ by $\Gamma$. Let $\eps>0$ be a small parameter and $b\e$, $d\e$, $h\e$ be positive numbers going to zero as $\eps\to 0$, $d\e\leq b\e\leq\eps$; also we suppose that $d\e$ tends to zero not too fast, namely
\begin{gather}
\label{>>}d\e\gg \exp(-a/\eps),\forall a>0\ (\text{if }n=2)
\text{\quad or\quad }
d\e\gg \eps^{n-1\over n-2}\ (\text{if }n>2)
\end{gather}
as $\eps\to 0$.

We attach a family of "room-and-passage"-like domains $\eps$-periodically along 
$\Gamma$. Each
attached domain  consists of two building blocks:
\begin{itemize}

\item the "room" $\simeq\ b\e B$, where $B$ is a fixed domain in $\mathbb{R}^{n}$,

\item the "passage" $\simeq\ d\e D\ \times [0,h\e]$, where $D$ is a fixed domain in $\mathbb{R}^{n-1}$. 

\end{itemize}
We denote these "rooms" and "passages" by $B_i\e$ and $T_i\e$ , correspondingly (the parameter $i$ counts them). The total number $N(\eps)$ of "rooms" (or "passages") tends to infinity as $\eps\to 0$, namely 
\begin{gather}\label{total}
N(\eps)\sim { \eps^{1-n} |\Gamma|}.
\end{gather}
Hereinafter we use the same notation $|\cdot|$ either for
the volume of a domain in $\mathbb{R}^n$ (for example, $|B|$), for
the volume of a domain in $\mathbb{R}^{n-1}$ (for example, $|D|$) or for
the area of an $(n-1)$-dimensional hypersurface in $\mathbb{R}^{n}$ (for example, $|\Gamma|$). 

We impose also some additional conditions (see \eqref{ass11}-\eqref{ass14}) guaranteeing that the neighbouring "rooms" are pairwise disjoint, that the $i$-th "room" and the $i$-th "passage" are correctly glued, and that the distance between the neighbouring "passages" is not too small, namely for $i\not=j$
 $\mathrm{dist}(\textcolor{black}{T_i\e,T_j\e})\geq C\eps$  (here $0<C<1$).

\begin{figure}[h]
\begin{center}
 \begin{picture}(220,140)
\scalebox{0.45}{ \includegraphics{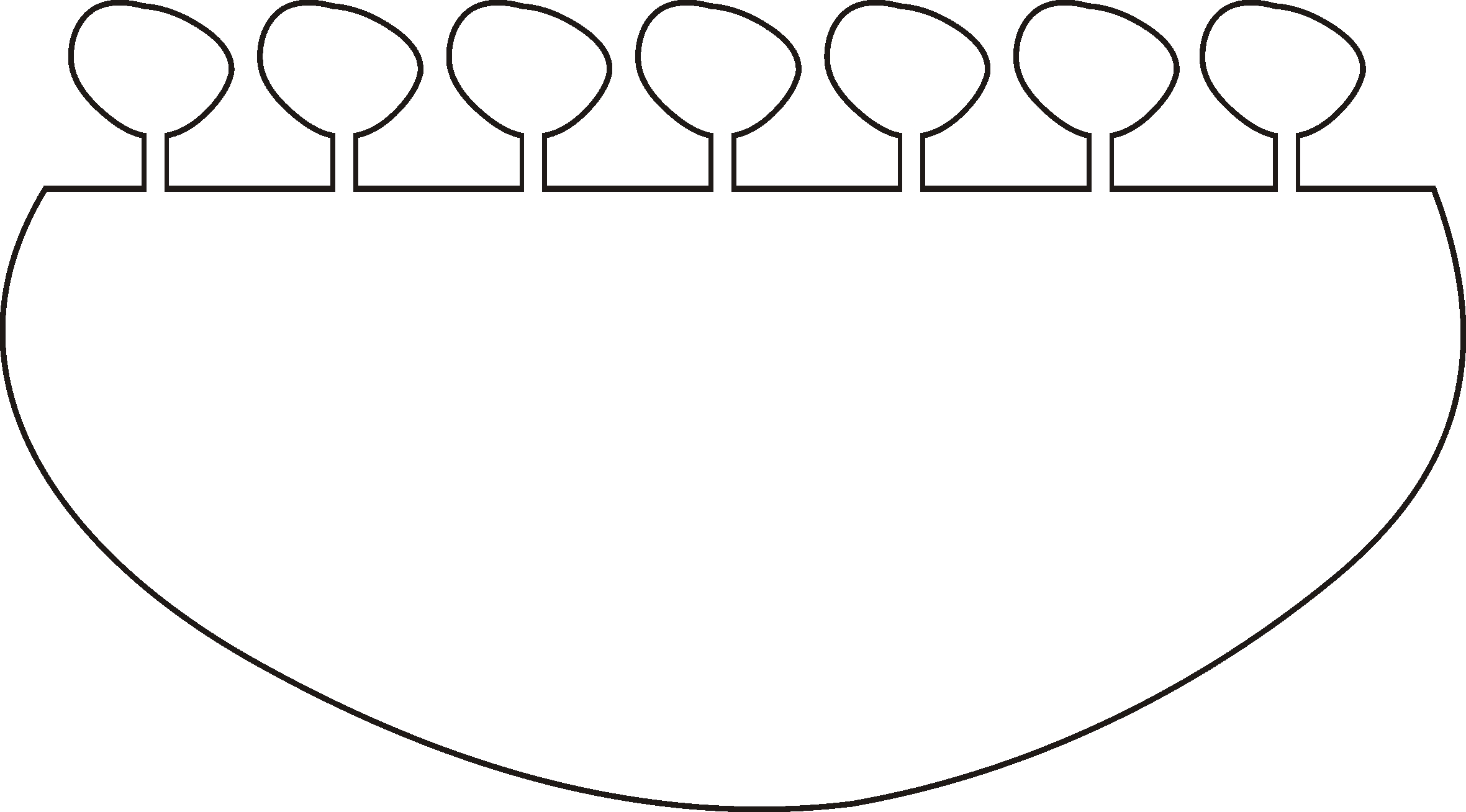}}

\put(-70, 45){$\Omega$} 

\put(0, 117){$\Gamma$} \put(-1,116){\vector(-2,-1){15}}

\put(-258, 112){$T_i\e$} \put(-244,115){\vector(1,0){15}}

\put(-235, 127){$B_i\e$} 

\put(-183, 95){$\eps$}\put(-183,98){\vector(-1,0){14}} 
\put(-177,98){\vector( 1,0){14}} 

\end{picture}
\end{center}

\caption{The domain $\Omega\e$}\label{fig1}
\end{figure}

We denote by $\Omega\e$ the obtained domain (see Fig.\ref{fig1}),
$$\Omega\e=\Omega\cup\left( \cupl_i (T_i\e\cup B_i\e)\right)$$
and introduce the operator
\begin{gather*}
\mathcal{A}\e=-{1\over\rho\e}\Delta_{\Omega\e}.
\end{gather*}
Here $\Delta_{\Omega\e}$ is the Neumann Laplacian in $\Omega\e$, the function $\rho\e$ is equal to $1$ everywhere except  the union of the "rooms", where it is equal to the constant $\varrho\e>0$. 
The operator $\mathcal{A}\e$ describes vibrations of the medium occupying 
$\textcolor{black}{\Omega\e}$ and having the mass density
$\rho\e$. 

Our goal is to study the behaviour of the spectrum $\sigma(\mathcal{A}\e)$ as $\eps\to 0$ 
under the assumption that the following limits exist:
\begin{gather}\label{qr_intro}
\liml_{\eps\to 0}{(d\e)^{n-1} |D|\over h\e \varrho\e(b\e)^n |B|}=:q\in [0,\infty],\quad \liml_{\eps\to 0}{\varrho\e(b\e)^n |B|\over\eps^{n-1}}=: r\in [0,\infty).
\end{gather}
We note, that the first limit is allowed to be infinite. 
The finiteness of $r$ implies the uniform (with respect to $\eps$) boundedness
of the total mass $m\e_B$ of the "rooms", namely, using \eqref{total}, we obtain:
\begin{gather*}
m\e_B:=\intl_{\cupl_i B_i\e}\rho\e \d x=\varrho\e \suml_{i}|B_i\e|=\varrho\e(b\e)^n |B| N(\eps)\sim
{\varrho\e(b\e)^n |B|\cdot |\Gamma|\over\eps^{n-1}}\sim r|\Gamma|\text{ as }\eps\to 0.
\end{gather*}
{\color{black}The meaning of the parameter $q$ will be explained in the end of 
Section \ref{sec1} (see Remark \ref{remark_q}).} 

Despite the fact that our problem contains many parameters the form 
of the limit spectral problem depends essentially only on either $q$ is finite or infinite and $r$ is positive or zero. 

We present the results in a rather formal way, more precise statements are 
formulated in the next section using the operator theory language. 
{\color{black}In what follows we study the behavior of spectra on finite 
intervals, the convergence is understood in the Hausdorff sense, }
see Definition \ref{def1}. One has the following four cases:
\begin{itemize}
\item[1)] $q<\infty,\ r>0$. In this case $\sigma(\mathcal{A}\e)$ converges as $\eps\to 0$ to the union of the point $q$ and the set of eigenvalues of the spectral problem
\begin{gather}\label{formal1}
\begin{cases}
-\Delta u=\lambda u&\text{ in }\Omega,\\ 
{\partial u\over\partial  n}={\lambda qr\over q-\lambda} u&\text{ on }\Gamma,\\
{\partial u\over\partial  n}=0&\text{ on }\partial\Omega\setminus\Gamma,
\end{cases}
\end{gather}
where $n$ is the outward-pointing unit normal to $\partial\Omega$ 
($\lambda$ is an eigenvalue if \eqref{formal1} has a non-trivial solution $u$).

The set of eigenvalues of the problem \eqref{formal1} consists of two ascending sequences --- one of them goes to infinity and the second one goes to $q$ (in the case $q=0$ the second sequence disappears). 

\item[2)] $q<\infty,\ r=0$. In this case $\sigma(\mathcal{A}\e)$ converges to the set $\sigma(-\Delta_\Omega)\cup \{q\}$.  

Formally, this result follows from the previous one if we set $r=0$ in \eqref{formal1}. 
But, since the spectral properties of \eqref{formal1} change drastically under the passage from $r>0$ to $r=0$, we  write out these cases separately. 

\item[3)] $q=\infty,\ r>0$. In this case $\sigma(\mathcal{A}\e)$ converges to the set of  eigenvalues of the spectral problem
\begin{gather}\label{formal2}
\begin{cases}
-\Delta u=\lambda u&\text{ in }\Omega,\\ 
{\partial u\over\partial  n}=\lambda r u&\text{ on }\Gamma,\\
{\partial u\over\partial  n}=0&\text{ on }\partial\Omega\setminus\Gamma.
\end{cases}
\end{gather}

\item[4)] $q=\infty,\ r=0$. In this case $\sigma(\mathcal{A}\e)$ converges to $\sigma(-\Delta_\Omega)$.

\end{itemize}

Obviously, all these cases can be realized. For example, if we take 
$$n=2,\quad d\e=\eps^\a\ (\a\geq 1),\quad b\e=h\e=\eps,\quad \varrho\e=\eps^{\b}\ (\b\geq -1)$$ then
condition \eqref{>>} holds true and the limits \eqref{qr_intro} exist, namely
\begin{gather*}
\begin{cases}
r>0,&\text{ if }\b=-1,\\
r=0,&\text{ if }\b>-1,
\end{cases}
\text{\quad and \quad}
\begin{cases}
q>0,&\text{ if }\a=\b+3,\\
q=0,&\text{ if }\a>\b+3,\\
q=\infty,&\text{ if }\a<\b+3.
\end{cases}
\end{gather*}

{\color{black}
In the current paper the convergence of spectra is understood as convergence of
sets (namely, Hausdorff convergence), and not in terms of  ``spectral 
convergence'' of the corresponding operators. In particular, 
the behaviour as $\eps\to 0$ of the $k$-th eigenvalue $\lambda_k\e$ of $\mathcal{A}\e$ for fixed $k$ is not analyzed. We will study this question elsewhere. In the current paper we only prove that
$$\lambda_k\e\to 0\text{ as }\eps\to 0\text{ provided }q=0.$$
More generally, we show that
\begin{gather}\label{tr}
\sup\limits_{k\in\mathbb{N}}\left(\underset{\eps\to 0}{\overline{\lim}}\lambda_k\e\right)\leq q.
\end{gather}
Following \citep{Melnyk1} we call the quantity staying in the left-hand-side of \eqref{tr} \textit{the threshold of low eigenfrequencies}.}

Note, that the choice of the boundary conditions on \textit{unperturbed} part of 
the boundary (i.e. on $\partial\Omega\setminus\Gamma$) is inessential -- instead 
of the Neumann boundary conditions we can prescribe, for example, the Dirichlet 
or mixed ones. These conditions will be inherited by the limit spectral problem.

The paper is organized as follows. In Section \ref{sec1} we set up the problem and formulate the main results absorbed in Theorem \ref{th1}, also in Section \ref{sec1} we prove inequality \eqref{tr} (Theorem \ref{th2}). Theorem \ref{th1} is proved in Section \ref{sec2}: in Subsection \ref{subsec21} we establish some auxiliary estimates, Section \ref{subsec22} is devoted to the case $q<\infty$, the case $q=\infty$ is treated in Subsection \ref{subsec23}. 
\smallskip

In the end of the introduction we would like to make some bibliographical comments.\smallskip

\noindent\textbf{1.} In the "classical" case $\varrho\e= 1$ (i.e. $\mathcal{A}\e=-\Delta_{\Omega\e}$) the property \eqref{hb+} follows from the next general result obtained in \citep{LS}:
let $\Omega\subset\mathbb{R}^n$ be a fixed domain and $\{\Omega\e\subset\mathbb{R}^n\}_{\eps}$ be a family of domains satisfying some mild regularity assumptions and 
\begin{gather}\label{LS}
\Omega\subset\Omega\e,\quad \mathrm{meas}(\Omega\e\setminus\Omega)\to 0\text{ as }\eps\to 0
\end{gather}
(here $\mathrm{meas}(\cdot)$ stays for the Lebesgue measure in $\mathbb{R}^n$), then \eqref{hb+} holds true. And indeed, although the number of attached "rooms" and "passages" tends to infinity, their total measure tends to zero.

It may happen, however, that \eqref{LS} holds true, but there exists a sequence $\lambda\e\in \sigma(-\Delta_{\Omega\e})$ converging to a point not belonging to $\sigma(-\Delta_{\Omega})$. As we will see below this can happen for "room-and-passage"-like perturbations. Another important class of such domains are so-called dumbbell-shaped domains. In a simplest case they are defined as follows: let $\Omega$ be a union of two disjoint domains $\Omega_j$, $j=1,2$ and $\Omega\e=\Omega\cup T\e$, where $T\e$ is a narrow straight channel connecting $\Omega_1$ and $\Omega_2$ and approaching a $1$-dimensional line segment of the length $L$ as $\eps\to 0$. It can be proved that if $\sigma(-\Delta_{\Omega\e})\ni\lambda\e\to\lambda$ as $\eps\to 0$ then either $\lambda\in \sigma(-\Delta_{\Omega_1})\cup \sigma(-\Delta_{\Omega_2})$ or $\lambda=\left({\pi k\over L}\right)^2$ for some $k\in\mathbb{N}$.
The spectral properties of boundary value problems posed in dumbbell-shaped domains were studied in a lot of papers -- see, e.g., 
\citep{Arrieta1,Jimbo}. Some general results allowing to characterize the set of accumulation points, which are not in the spectrum of $\sigma(-\Delta_{\Omega})$, were obtained in \citep{Arrieta2}.
\smallskip

\noindent\textbf{2.} One can also study the behaviour of the spectrum of the Dirichlet Laplacian under a perturbation of the boundary of a domain. But in this case the continuity of eigenvalues holds for rather wide set of perturbations. For example (cf. \citep{RT}), if   $\Omega$ is a bounded domains and
\begin{gather}\label{D}
\begin{array}{l}
\text{for every compact set $F\subset\Omega$  there exists $\eps_F>0$ such that $F\subset\Omega\e$ provided $\eps<\eps_F$,}\\
\text{for every open set $O\supset\overline{\Omega}$ there exists $\eps_O>0$ such that $\Omega\e\subset O$ provided $\eps<\eps_O$},
\end{array}
\end{gather}
then the $k$-th eigenvalue of the Dirichlet Laplacian in $\Omega\e$ converges to the $k$-th eigenvalue of the Dirichlet Laplacian in $\Omega$. It is easy to see that "room-and-passage"-like perturbations described above (with $\rho\e\equiv 1$) satisfy conditions \eqref{D}. However, the situation might be more complicated if together with the geometry of a boundary we perturb the mass density near it.\smallskip

\noindent\textbf{3.}
Boundary value problems in domains with rapidly oscillating boundary
attract a great attention of mathematicians in recent years. 
Such problems are motivated by various applications in physics and 
engineering sciences (for example, in scattering of acoustic and electromagnetic waves on small periodic obstacles). We mention here some papers devoted to such problems -- 
\citep{Belyaev,Brizzi,MN,CFP}, more references one can find in \citep{ACG}.
To the best of our knowledge, problems in domains with rapidly oscillating boundary and 
"room-and-passage"-like geometry of a period cell were considered only in the book  \citep{Sanchez} mentioned above.
\smallskip

\noindent\textbf{4.}
The asymptotic behaviour of eigenvibrations of a body with 
mass density singularly perturbed near its boundary was studied, for example, in 
\citep{Lobo1,Lobo2,CCDP,NP} and many other papers.  
The overview of results in this area one can find in the introduction of \citep{Che1}. 
Perturbations involving both the perturbation of the boundary and of the mass density were studied in \citep{Melnyk1,Melnyk2}, here the domain in $\mathbb{R}^2$ is perturbed by attaching a lot of narrow strips of a fixed length (so-called \textit{thick junctions}), on these strips the mass density is large. 
\smallskip

\noindent\textbf{5.} As we announced above our limit spectral problem may 
contain the spectral parameter in boundary conditions. Namely, we get the 
following boundary conditions: 
\begin{gather}\label{meromo}
{\partial u\over\partial n}=\mathcal{F}(\lambda) u,
\end{gather}
where $\mathcal{F}(\lambda)$ is either linear ($q=\infty$) or rational 
($q<\infty$) function. In the later case $\mathcal{F}(\lambda)$ has exactly one 
pole (the point $q$), which is a point of accumulation of eigenvalues.

The boundary conditions of the form \eqref{meromo} appear in some problems with 
concentrated masses (cf. \citep{Lobo1}), also \textcolor{black}{the same effect 
can be observed} 
in problems involving thick junctions -- see, e.g., \citep{MN}. In these 
problems $\mathcal{F}$ is a meromorphic function with a sequence of poles. 

Elliptic boundary value problems with boundary conditions containing a spectral parameter were studied in many papers -- see, e.g.,  \citep{Behrndt} and references  therein.
\smallskip

\noindent\textbf{6.}
Domains with "room-and-passage"-like geometry are widely used in order to 
construct examples illustrating various phenomena in Sobolev spaces theory (see, for example, \citep{Mazya}). For instance, it is well-known, that if $\Omega$ is a \textit{bounded} domain then the imbedding $i_\Omega: H^1(\Omega)\hookrightarrow L_2(\Omega)$
is compact provided $\partial\Omega$ is sufficiently regular.  If $\partial\Omega$ is not smooth, then $i_\Omega$ need not be compact. One of the examples demonstrating this can be constructed from a sequence of "rooms", which are joint by a sequence of "passages". 
The number of the  "rooms" and "passages" is infinite, but their diameters 
decrease in such a way that their union is bounded. It turns out 
(cf. \citep{Fraenkel}) that under a special choice of sizes of "rooms" and "passages" the embedding $i_\Omega$ is non-compact. 

The non-compactness  of  $i_\Omega$ leads to occurrence of 
essential spectrum for $\Delta_\Omega$. In this connection, we mention the following nice result from \citep{HSS}: for an arbitrary closed set $S\subset [0,\infty)$, $\{0\}\in S$ and $n\geq 2$ one can construct the domain $\Omega\subset\mathbb{R}^{n}$ such that the essential spectrum of $-\Delta_\Omega$ is just
this set $S$. In their constructions the authors of \citep{HSS} used "room-and-passage"-like domains.

\smallskip
\noindent\textbf{7.} One can consider also the "bulk" analogue of our domain 
$\Omega\e$. 
Namely, we perturbed the domain $\Omega$ to the family $\{\Omega\e\}_\eps$,  $\Omega\e=\left(\Omega\setminus\cup_{i}D_i\e\right)\cup(\cup_i B_i\e)$ of $n$-dimensional Riemannian manifolds, where $\{D_i\e\}_i$ is a family of small holes distributed $\eps$-periodically in $\Omega$ (they play the role of "passages"), $\{B_i\e\}_i$ is a family of spherical surfaces (they play the role of "rooms"), $B_i\e$ is glued to the boundary of $D_i\e$.  When $\eps\to 0$ the number of attached surfaces goes to infinity, while their radii goes to zero. Instead of the usual Laplacian we study the Lalplace-Beltrami operator $\widetilde\Delta_{\Omega\e}$ in $\Omega\e$ (the choice of the boundary conditions on $\partial\Omega\e=\partial\Omega$ is inessential). Evolution equations involving  $\widetilde\Delta_{\Omega\e}$ were studied in \citep{BK,BCK}.
The behaviour of its spectrum as $\eps\to 0$ was studied in \citep{Khrab1,Khrab2,Khrab3}. 
In particular, it was shown in \citep{Khrab1} that under a suitable choice of sizes of $D_i\e$ and $B_i\e$
the limit spectral problem in $\Omega$ has the form $-\Delta u=\mathcal{F}(\lambda)u$, where 
$\mathcal{F}(\lambda)$ is  a rational function with one pole; the spectrum of this problem has the same structure as we have in our limit problem, when $q<\infty$, $r>0$.  

{\color{black}
\smallskip
\noindent\textbf{8.} 
Our research is inspired, in particular, by spectral problems involving
periodic differential operators posed in domains with periodic waveguide-like 
geometry (tubes with periodically varying cross-section, etc.). It is well-known 
that spectra of such operators have band structure, that is a locally finite 
union of 
compact
intervals -- \textit{bands}. In general the bands may overlap, otherwise we 
have a \textit{gap} in the spectrum, i.e. an open bounded interval having an 
empty 
intersection with the spectrum, but with edges belonging to it.
The 
presence of gaps is important for the description of wave processes 
which are governed by differential operators under consideration. If the 
wave frequency belongs to a gap, then the corresponding wave cannot propagate in 
the medium. 
The problem of constructing of periodic differential operators posed 
in domains with periodic waveguide-like geometry and having non-void spectral 
gaps attracts a lot attention in the last years -- see, e.g., 
\citep{BorPan,Cardone1,Cardone2}. 

Our guess is that ``room-and-passage''-like perturbation of a straight 
waveguide 
will imply the opening of gaps. 
This conjecture is based on the following observation.
Let $\Omega=(0,d)\times (-l,l)$  ($d,l>0$) be a strip in $\mathbb{R}^2$, 
$\Gamma=\{d\}\times (-l,l)$. It is not hard to show (using the proof of Lemma 
\ref{lm1}) that there exists $\widehat q\in (q,\infty)$ 
such that for an arbitrary $l$ the interval $(q,\widehat q)$ has an empty 
intersection with the 
spectrum of the problem \eqref{formal1}
provided $d$ is less some $d_0$ (where $d_0$ is 
independent of 
$l$). This suggests that the spectrum of the operator $\mathcal{A}\e$ posed in 
the unbounded ($l=\infty$) periodic waveguide-like domain $\Omega\e$ has a gap 
provided 
$\eps$ and the thickness of unperturbed strip $\Omega$ are small enough. This 
problem will be studied in all details in our subsequent work.
}

\section{\label{sec1}Setting of the problem and main results}

In what follows by $x'=(x_1,\dots,x_{n-1})$ and $x=(x',x_n)$ we denote the Cartesian coordinates in $\mathbb{R}^{n-1}$ and $\mathbb{R}^{n}$, correspondingly. 

Let $\Omega$ be an open bounded domain in $\mathbb{R}^{n}$ ($n\geq 2$) with a Lipschitz boundary and satisfying the condition $\Omega\subset \{x\in\mathbb{R}^n:\ x_n<0\}$. We denote
$$\Gamma=\partial\Omega\cap \{x\in\mathbb{R}^n:\ x_n=0\}.$$ 
It is supposed that the set $\left\{x'\in\mathbb{R}^{n-1}:\ (x',0)\in\Gamma\right\}\subset\mathbb{R}^{n-1}$ has non-empty interior.

In the space $L_2(\Omega)$  we introduce the sesquilinear form $\eta$ defined by the formula
\begin{gather*}
\eta[u,v]=\intl_{\Omega}\nabla u\cdot\nabla\bar {v} \d x,
\end{gather*}
with $\mathrm{dom}(\eta)=H^1(\Omega).$ The form  $\eta$ is densely defined, closed, positive and symmetric, whence (cf. \citep[Theorem VIII.15]{Reed}) there
exists the unique self-adjoint and positive operator
$\mathcal{A}$ associated with the form
$\eta$, i.e.
\begin{gather*}
(\mathcal{A} u,v)_{L_2(\Omega)}=
\eta[u,v],\quad\forall u\in
\mathrm{dom}(\mathcal{A}),\ \forall v\in
\mathrm{dom}(\eta).
\end{gather*}
The operator $\mathcal{A}$ is the Laplacian in $\Omega$ subject to the Neumann boundary conditions on $\partial\Omega$.

Now, we introduce the "rooms" and "passages". Let $\eps>0$ be a small parameter. We denote by $\{x^{i,\eps}\}_{i\in\mathbb{Z}^{n-1}}$ the family of points $\eps$-periodically distributed on the hyperplane $\{x\in\mathbb{R}^n:\ x_n=0\}$:
$$x^{i,\eps}=(\eps i,0),\ i\in\mathbb{Z}^{n-1}.$$
For $i\in\mathbb{Z}^{n-1}$ we set:
\begin{itemize}

\item[] $B_i\e=\left\{x\in\mathbb{R}^n:\ \ds{1\over b\e}\left(x-\tilde x^{i,\eps}\right)\in B,\text{ where }\tilde x^{i,\eps}=x^{i,\eps}+(0,h\e)\right\}$\quad (the $i$-th "room"),

\item[] $T_i\e=\left\{x\in\mathbb{R}^n:\ \ds{1\over d\e}\left(x'-\eps i\right)\in D,\ 0\leq x_n\leq h\e\right\}$\quad (the $i$-th "passage"),

\end{itemize}
where $b\e$, $d\e$, $h\e$ are positive constants,
$B$ and $D$ are open bounded domains in $\mathbb{R}^{n}$ and $\mathbb{R}^{n-1}$, correspondingly, having Lipschitz boundaries and satisfying the conditions
\begin{gather}\label{ass11}
B\subset \left\{x\in\mathbb{R}^n:\ x'\in\left(-{1\over 2},{1\over 2}\right)^{n-1},\ x_n>0\right\},\\\label{ass12}
\exists R\in \left(0,{1\over 2}\right):\ \left\{x\in\mathbb{R}^n:\ |x'|<R,\ x_n=0\right\}\subset \partial B,\\
\label{ass13}
\{0\}\in D\subset \left\{x'\in\mathbb{R}^{n-1}:\ |x'|<R\right\},\text{ where }R\text{ comes from }\eqref{ass12},\\\label{ass14}
d\e\leq b\e\leq \eps,\\
\label{ass15}
h\e\to 0\text{ as }\eps\to 0.
\end{gather}
Conditions \eqref{ass11}-\eqref{ass14} imply that the neighbouring "rooms" are pairwise disjoint
and guarantee correct gluing of the $i$-th "room" and the $i$-th "passage" (namely, the upper face of $T_i\e$ is contained in $\partial B_i\e$).  Also, it follows from \eqref{ass13}-\eqref{ass14} that
the distance between the neighbouring "passages" is not too small, namely
for $i\not=j$ one has $\mathrm{dist}(\textcolor{black}{T_i\e,T_j\e})\geq \eps-2R 
d\e\geq 2\eps\left ({1\over 2}-R\right)$.

Additionally, we  suppose that
\begin{gather}\label{ass2} 
\eps^{n-1}\mathbf{D}\e\to 0\text{ as }\eps\to 0,
\end{gather}
where 
\begin{gather}\label{De}
\mathbf{D}\e=\begin{cases}
|\ln d\e| ,& n=2,\\
(d\e)^{2-n},& n>2
\end{cases}
\end{gather}
(obviously, \eqref{ass2} is equivalent to \eqref{>>}).

Attaching the "rooms" and the "passages" to $\Omega$ we obtain the perturbed 
domain
$$\Omega\e=\Omega\cup \left({\cupl_{i\in \I}\left(T_i\e\cup B_i\e\right)}\right),$$
where 
$$\mathcal{I}\e=\left\{i\in\mathbb{Z}^{n-1}:\ x^{i,\eps}\in\Gamma\text{ and }\mathrm{dist}(x^{i,\eps},\partial\Omega\setminus\Gamma)\geq \eps {\sqrt{n}\over 2}\right\}.$$ The domain $\Omega\e$ is depicted on Fig. \ref{fig1}.

Now, let us define accurately the operator $\mathcal{A}\e$, which will be the main object of our interest. We denote by $\mathcal{H}\e$ the Hilbert space of functions from $L_2(\Omega\e)$ endowed with a scalar product
\begin{gather*}
(u,v)_{\mathcal{H}\e}=\intl_{\Omega\e} u(x)\overline{v(x)}  \rho \e(x) \d x,
\end{gather*}
where the function $\rho\e(x)$ is defined as follows:
$$\rho\e(x)=\begin{cases}1,&x\in\Omega\cup\left(\cupl_{i\in\I}T_i\e\right),\\\varrho\e,& x\in \cupl_{i\in\I}B_i\e,\end{cases}\quad \varrho\e>0\text{ is a constant.}
$$
By $\eta\e$ we denote the sesquilinear form in $\mathcal{H}\e$ defined by the formula
\begin{gather*}
\eta\e[u,v]=\intl_{\Omega\e} \nabla u\cdot\nabla \bar v \d x
\end{gather*}
with 
$\mathrm{dom}(\eta\e)=H^1(\Omega\e).$ 
The form  $\eta\e$ is densely defined, closed, positive and symmetric. We denote by
$\mathcal{A}\e$ the operator associated with this form, i.e.
\begin{gather*}
(\mathcal{A}\e u,v)_{\mathcal{H}\e}=
\eta\e[u,v],\quad\forall u\in
\mathrm{dom}(\mathcal{A}\e),\ \forall v\in
\mathrm{dom}(\eta\e).
\end{gather*}
In other words, the operator $\mathcal{A}\e$ is defined by the operation $-{1\over \rho\e}\Delta$ in $\Omega\e$ and
the  Neumann boundary conditions on $\partial\Omega\e$.
 
The spectrum $\sigma(\mathcal{A}\e)$ of the operator $\mathcal{A}\e$ is purely discrete. The goal of this work is to describe the behaviour of $\sigma(\mathcal{A}\e)$ as $\eps\to 0$  
under the assumption that the following limits exists:
\begin{gather}\label{qere+}
q:=\liml_{\eps\to 0}q\e,\ r:=\liml_{\eps\to 0}r\e,\quad q\in [0,\infty],\ r\in [0,\infty),
\end{gather}
where
\begin{gather}
\label{qere}
q\e={(d\e)^{n-1} |D|\over h\e \varrho\e(b\e)^n |B| },\ r\e={\varrho\e(b\e)^n |B|\over  \eps^{n-1}}.
\end{gather}

In order to formulate the main results we introduce additional spaces and 
operators.

If $r>0$ then by $\mathcal{H}$ we denote the Hilbert space of functions from $L_2(\Omega)\oplus L_2(\Gamma)$ endowed with the scalar product
\begin{gather*}
(U,V)_{\mathcal{H}}=\intl_{\Omega}u_1(x)\overline{v_1(x)} \d x+\intl_\Gamma u_2(x)\overline{v_2(x)} r \d s,\quad U=\left(u_1,u_2\right),\ V=(v_1,v_2).
\end{gather*}
Hereinafter, we use a standard notation $\d s$ for the density of the measure generated 
on $\Gamma$ (or any other $(n-1)$-dimensional hypersurface) by the Euclidean metrics in $\mathbb{R}^n$.

For $q<\infty$ we introduce the sesquilinear form $\eta_{qr}$ in $\mathcal{H}$ by the formula
\begin{gather*}
\eta_{qr}[U,V]=\intl_\Omega {\nabla u_1}\cdot {\nabla \overline{v_1}}\d x+\intl_\Gamma qr(u_1-u_2)(\overline{v_1-v_2})\d s,\quad U=\left(u_1,u_2\right),\ V=(v_1,v_2)
\end{gather*}
with $\mathrm{dom}(\eta_{qr})=H^1(\Omega)\oplus L_2(\Gamma)$. 
Here we use the same notation for the functions $u_1$, $v_1$ and their traces on $\Gamma$.
This form is densely defined, closed, positive and symmetric. We denote by $\mathcal{A}_{qr}$ the self-adjoint operator associated with this form. Formally, the resolvent equation $\mathcal{A}_{qr} U-\lambda U=F$ (where $U=\left(u_1,u_2\right)$, $F=\left(f_1,f_2\right)$ ) can be written as follows:
\begin{gather*}
\begin{cases}
-\Delta u_1-\lambda u_1=f_1&\text{ in }\Omega,\\ 
{\partial u_1\over\partial  n}+qr(u_1-u_2)=0&\text{ on }\Gamma,\\
q(u_2-u_1)-\lambda u_2=f_2&\text{ on }\Gamma,\\
{\partial u_1\over\partial n}=0&\text{ on }\partial\Omega\setminus\Gamma,
\end{cases}
\end{gather*}
where $n$ is the outward-pointing unit normal to $\Gamma$.

If $q=0$ then $\mathcal{A}_{qr}$  is a direct sum of the operator $\mathcal{A}$ and the null operator in $L_2(\Gamma)$. As a result we have (below by $\sigma_{\mathrm{ess}}$ and $\sigma_{\mathrm{disc}}$ we denote  the essential and the discrete parts of the spectrum):
\begin{gather}\label{q1}
\text{ if }q=0\text{ then }\sigma_{\mathrm{ess}}(\mathcal{A}_{qr})=\{0\},\ \sigma_{\mathrm{disc}}(\mathcal{A}_{qr})=\sigma(\mathcal{A})\setminus\{0\}.
\end{gather}
In the case $q>0$ one has the following result.

\begin{lemma}\label{lm1}
Let $q>0$. Then the spectrum of the operator $\mathcal{A}_{qr}$ has the form
\begin{gather*}
\label{spectrum} \sigma(\mathcal{A}_{qr})=\{q\}\cup 
\{\lambda_k^-,k=1,2,3...\}\cup\{\lambda_k^+,k=1,2,3...\}.
\end{gather*}
The points $\lambda_k^{\pm}, k=1,2,3...$ belong to the discrete
spectrum, $q$ is a point of the essential spectrum and they are distributed  as follows:
\begin{gather}\label{distr}
0= \lambda_1^{+}\leq \lambda_2^{+}\leq ...\leq\lambda_k^{+}\leq\dots\underset{k\to\infty}\to
q<  \lambda_1^{-}\leq
\lambda_2^{-}\leq ...\leq\lambda_k^{-}\leq\dots\underset{k\to\infty}\to \infty.
\end{gather}
\end{lemma}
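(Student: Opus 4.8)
The plan is to eliminate the boundary unknown $u_2$, reducing the spectral analysis of $\mathcal{A}_{qr}$, away from the point $q$, to the nonlinear eigenvalue problem \eqref{formal1}, which in turn is governed by a monotone one-parameter family of Robin Laplacians on $\Omega$. Concretely, I would write the eigenvalue equation $\mathcal{A}_{qr}U=\lambda U$, $U=(u_1,u_2)$, in weak form: testing with $V=(0,v_2)$ gives $q(u_2-u_1)=\lambda u_2$ on $\Gamma$, so for $\lambda\neq q$ one has $u_2=\tfrac{q}{q-\lambda}u_1|_\Gamma$ (and then $u_1\not\equiv0$, otherwise $u_2\equiv0$); testing with $V=(v_1,0)$ and inserting this relation shows that $u_1$ is a non-trivial solution of \eqref{formal1}, while conversely every non-trivial solution $u_1$ of \eqref{formal1} produces the eigenfunction $U=(u_1,\tfrac{q}{q-\lambda}u_1|_\Gamma)$ of $\mathcal{A}_{qr}$, with the same multiplicity. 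Applying the same computation to the resolvent equation $(\mathcal{A}_{qr}-\lambda)U=F$ --- the extra datum $f_2\in L_2(\Gamma)\subset H^{-1/2}(\Gamma)$ entering only as an admissible Neumann-type inhomogeneity on $\Gamma$ --- shows that $(\mathcal{A}_{qr}-\lambda)^{-1}$ exists and is bounded whenever $\lambda\neq q$ is not an eigenvalue of \eqref{formal1}. Writing the boundary condition in \eqref{formal1} as $\partial u/\partial n+\theta(\lambda)u=0$ with $\theta(\lambda):=\tfrac{\lambda qr}{\lambda-q}$, and letting $\mathcal{R}(\theta)$ be the Robin Laplacian in $L_2(\Omega)$ generated by the form $\int_\Omega|\nabla u|^2\,dx+\theta\int_\Gamma|u|^2\,ds$ on $H^1(\Omega)$ (Neumann on $\partial\Omega\setminus\Gamma$), which has compact resolvent since $\Omega$ is bounded and Lipschitz, with $k$-th eigenvalue $\mu_k(\theta)$, the above yields: for $\lambda\neq q$, $\lambda\in\sigma(\mathcal{A}_{qr})$ if and only if $\lambda=\mu_k(\theta(\lambda))$ for some $k$, and each such $\lambda$ is an isolated eigenvalue of finite multiplicity.

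I would then invoke the classical properties of $\mu_k$: by min-max it is non-decreasing and continuous in $\theta\in\mathbb{R}$, with $\mu_k(0)=\mu_k^{N}$ (the $k$-th Neumann eigenvalue of $\Omega$; $\mu_1^{N}=0\le\mu_2^{N}\le\cdots$), $\mu_k(\theta)\uparrow\mu_k^{D}$ as $\theta\to+\infty$ ($\mu_k^{D}$ the $k$-th eigenvalue of the Laplacian with Dirichlet condition on $\Gamma$ and Neumann on $\partial\Omega\setminus\Gamma$), $\mu_k(\theta)\to-\infty$ as $\theta\to-\infty$ (using $|\Gamma|>0$), and $\mu_k(\theta)\to+\infty$ as $k\to\infty$ for each fixed $\theta$. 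The function $\theta(\lambda)=qr+\tfrac{q^2r}{\lambda-q}$ is strictly decreasing on each of $(0,q)$ and $(q,\infty)$: it decreases from $\theta(0)=0$ down to $-\infty$ as $\lambda$ runs over $(0,q)$, and from $+\infty$ down to $qr$ as $\lambda$ runs over $(q,\infty)$; moreover $\mathcal{A}_{qr}\ge0$ excludes $\lambda<0$. Hence $g_k(\lambda):=\mu_k(\theta(\lambda))-\lambda$ is strictly decreasing on each of $(0,q)$ and $(q,\infty)$. On $(0,q)$: $g_k(0^{+})=\mu_k^{N}\ge0$ and $g_k(q^{-})=-\infty$, so $g_k$ has exactly one zero $\lambda_k^{+}\in[0,q)$, with $\lambda_1^{+}=0$ (the constant eigenfunction); since $g_k\le g_{k+1}$ one gets $\lambda_k^{+}\le\lambda_{k+1}^{+}$, and since $g_k(\Lambda)=\mu_k(\theta(\Lambda))-\Lambda\to+\infty$ as $k\to\infty$ for any fixed $\Lambda\in(0,q)$, we obtain $\lambda_k^{+}\to q$. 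On $(q,\infty)$: $\sup_{(q,\infty)}g_k=\mu_k^{D}-q$ (approached as $\lambda\to q^{+}$) and $g_k(+\infty)=-\infty$, so $g_k$ has a unique zero in $(q,\infty)$ precisely when $\mu_k^{D}>q$; as $(\mu_k^{D})_k$ is non-decreasing and tends to $+\infty$, only finitely many indices are excluded, and every such zero $\lambda$ satisfies $\lambda=\mu_k(\theta(\lambda))\ge\mu_k(qr)$ (since $\theta(\lambda)>qr$ there), so is bounded below by $\mu_k(qr)\to+\infty$; listing the zeros in increasing order gives $q<\lambda_1^{-}\le\lambda_2^{-}\le\cdots\to+\infty$.

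Putting everything together, $\sigma(\mathcal{A}_{qr})\setminus\{q\}=\{\lambda_k^{+}\}_{k\ge1}\cup\{\lambda_k^{-}\}_{k\ge1}$, all isolated eigenvalues of finite multiplicity arranged as in \eqref{distr}; since $\lambda_k^{+}\to q$, the point $q$ belongs to the spectrum and is an accumulation point of it, hence $q\in\sigma_{\mathrm{ess}}(\mathcal{A}_{qr})$, and as every other spectral point is discrete, $\sigma_{\mathrm{ess}}(\mathcal{A}_{qr})=\{q\}$ --- which is the assertion of the lemma. I expect the main obstacle to be the rigorous execution of the first step: the operator-theoretic elimination of $u_2$, i.e.\ pinning down $\mathrm{dom}(\mathcal{A}_{qr})$ and controlling $(\mathcal{A}_{qr}-\lambda)^{-1}$ through the resolvent of $\mathcal{R}(\theta(\lambda))$ with an $L_2(\Gamma)$ right-hand side on $\Gamma$, and the precise justification of the endpoint behaviour $\mu_k(\theta)\to-\infty$ ($\theta\to-\infty$) and $\mu_k(\theta)\to\mu_k^{D}$ ($\theta\to+\infty$); the monotone case analysis in the second step is then routine.
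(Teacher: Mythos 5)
Your proposal is correct and follows essentially the same route as the paper's proof in Subsection 3.2.3: eliminate $u_2$ via $u_2=\tfrac{q}{q-\lambda}u_1|_\Gamma$, reduce to the one-parameter family of Robin Laplacians on $\Omega$ (the paper's $\mathcal{A}^\mu$ with $\mu=-\theta$, whose eigenvalue curves are monotone, continuous, tend to the Dirichlet-on-$\Gamma$ eigenvalues at one end and to $-\infty$ at the other), count intersections with the two branches of the hyperbola $\mu=\tfrac{\lambda qr}{q-\lambda}$, and treat the resolvent set by the analogous substitution with the $L_2(\Gamma)$ datum (which the paper formalizes through the auxiliary operator $\widehat{\mathcal{A}}^{\nu}$). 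The only differences are cosmetic: a sign convention for the Robin parameter and phrasing the intersection count through the decreasing function $g_k(\lambda)=\mu_k(\theta(\lambda))-\lambda$ rather than through the curves $\Upsilon_k$ and $\Upsilon_\pm$.
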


We present the proof of this lemma \textcolor{black}{at} the end of Subsection 
\ref{subsec22}. At this point we only note that if $\lambda\not= q$ is 
\textcolor{black}{an} eigenvalue of $\mathcal{A}_{qr}$ and $U=(u_1,u_2)$ is the 
corresponding eigenfunction then $u_1\not=0$ and $\lambda$ satisfies 
\eqref{formal1} with $u:=u_1$. Vice versa, if $\lambda\not= q$ and  $u\not=0$ 
satisfies \eqref{formal1} then $\lambda$ is the eigenvalue of 
$\mathcal{A}_{qr}$, 
$U=(u,{qu\over q-\lambda})$ is the corresponding eigenfunction.
 
Also we introduce in $\mathcal{H}$ the sesquilinear form 
\begin{gather*}
\widetilde \eta[U,V]=\intl_\Omega {\nabla u_1}\cdot {\nabla \overline{v_1}}\d x,\ U=(u_1,u_2),\ V=(v_1,v_2)
\end{gather*}
with $\mathrm{dom}(\widetilde\eta)=\left\{U=(u_1,u_2)\in H^1(\Omega)\oplus L_2(\Gamma):\ u_1|_{\Gamma}=u_2\right\}$, where $u_1|_{\Gamma}$ means the trace of $u_1$ on $\Gamma$. The form $\widetilde\eta$ is densely defined, closed, positive and symmetric. We denote by ${\mathcal{A}}_r$ the self-adjoint operator associated with this form. The resolvent equation ${\mathcal{A}}_r U-\lambda U=F$ (where $U=\left(u,u|_\Gamma\right)$, $F=\left(f_1,f_2\right)$) can be written as follows:
\begin{gather*}
\begin{cases}
-\Delta u-\lambda u=f_1&\text{ in }\Omega,\\ 
{\partial u\over\partial  n} -\lambda r u=  r f_2&\text{ on }\Gamma,\\{\partial u\over\partial n}=0&\text{ on }\partial\Omega\setminus \Gamma.
\end{cases}
\end{gather*}
It is clear that ${\mathcal{A}}_r$ has compact resolvent in view of the trace theorem and the Rellich embedding theorem. Therefore the spectrum of ${\mathcal{A}}_r$ is purely discrete.

Finally, for $q<\infty$ we denote by $\mathcal{A}_q$ the operator acting in $L_2(\Omega)\oplus L_2(\Gamma)$ and defined as follows:
$$\mathcal{A}_q=\mathcal{A}\oplus qI,$$
where $I$ is the identity operator in $L_2(\Gamma)$. Obviously, 
\begin{gather}\label{q2}
\sigma_{\mathrm{ess}}(\mathcal{A}_q)=\{q\},\quad 
\sigma_{\mathrm{disc}}(\mathcal{A}_q)=\sigma(\mathcal{A})\setminus\{q\}.
\end{gather}

In what follows speaking about the convergence of spectra we will use the 
concept of the \textit{Hausdorff convergence} (see, e.g., 
\citep{V}). 

{\color{black}
\begin{definition}\label{def1}
The Hausdorff distance between two compact sets $X,Y\subset\mathbb{R}$ 
is defined as follows:
$$\mathrm{dist}_H(X,Y):=\max\limits\left\{\sup\limits_{
x\in X }\inf\limits_{y\in Y} |x-y|;
\sup\limits_{
y\in Y }\inf\limits_{x\in X} |y-x|\right\}.$$
The sequence of sets $X\e\subset\mathbb{R}$ converges to the set 
$X\subset\mathbb{R}$ in the Hausdorff sense if 
$$\mathrm{dist}_H(X\e,X)\to 0\text{ as }\eps\to 0.$$
\end{definition}

Now, we are in position to formulate the main result of this paper. 

\begin{theorem}\label{th1} 
Let $l\subset \mathbb{R}$ be an arbitrary compact interval.
Then the set $\sigma(\mathcal{A}\e)\cap l$ converges in the Hausdorff 
sense as $\eps\to 0$ to the set $\sigma_{0}\cap l$, where
$$\sigma_{0}=\begin{cases}
\sigma(\mathcal{A}_{qr}),&\text{if }q<\infty,\ r>0, \\
\sigma(\mathcal{A}_q),&\text{if }q<\infty,\ r=0, \\
\sigma(\mathcal{A}_r),&\text{if }q=\infty,\ r>0,\\
\sigma(\mathcal{A}),&\text{if }q=\infty,\ r=0.
\end{cases}$$
\end{theorem}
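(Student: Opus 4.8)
The plan is to reduce the Hausdorff convergence to the two complementary facts: (a) every $\lambda\in\sigma_0\cap l$ is the limit of a sequence $\lambda\e\in\sigma(\mathcal{A}\e)$; and (b) whenever $\lambda\e\in\sigma(\mathcal{A}\e)\cap l$ and $\lambda\e\to\lambda$ along a subsequence, then $\lambda\in\sigma_0$. Since $l$ is compact and $\sigma_0$ closed, (a) and (b) together give $\mathrm{dist}_H(\sigma(\mathcal{A}\e)\cap l,\sigma_0\cap l)\to 0$.

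For (a), I construct for each $\lambda\in\sigma_0$ an approximate eigenfunction $u\e\in H^1(\Omega\e)$ with $\|\mathcal{A}\e u\e-\lambda u\e\|_{\mathcal{H}\e}=o(1)\,\|u\e\|_{\mathcal{H}\e}$; by the spectral theorem this forces $\mathrm{dist}(\lambda,\sigma(\mathcal{A}\e))\to 0$. If $\lambda$ lies in the discrete part of the relevant limit operator ($\mathcal{A}_{qr}$, $\mathcal{A}_q$, $\mathcal{A}_r$ or $\mathcal{A}$), I take a corresponding eigenfunction, replace its $\Omega$-component $u_1$ by a smooth approximation, and put $u\e=u_1$ on $\Omega$, $u\e$ equal to the relevant cell value of the $\Gamma$-component on each room $B_i\e$, and $u\e$ equal to the linear-in-$x_n$ interpolation of these two values on each passage $T_i\e$. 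The defect is controlled by the asymptotics of the Dirichlet energy of a single passage ($\asymp(d\e)^{n-1}|D|/h\e$) and of the mass of a single room ($\asymp\varrho\e(b\e)^n|B|$), together with $N(\eps)\sim\eps^{1-n}|\Gamma|$; see \eqref{total}, \eqref{qere}. For $\lambda=q$, which lies in the essential spectrum of the limit operator when $q<\infty$, the single test function equal to $1$ on one room, linear on its passage, and zero elsewhere already has Rayleigh quotient $q\e\to q$, hence $\mathrm{dist}(q,\sigma(\mathcal{A}\e))\to 0$; no genuine Weyl sequence is needed, since only convergence of sets is asserted.

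For (b), let $u\e$ be a normalized eigenfunction associated with $\lambda\e\to\lambda$, so $\int_{\Omega\e}|\nabla u\e|^2=\lambda\e\le C$. I then invoke the auxiliary estimates of Subsection \ref{subsec21}: a scaled Poincaré inequality on each room (so that $u\e$ differs from its room average $c_i\e$ by $O(b\e)$ in $L_2$, the weighted total discrepancy tending to $0$ thanks to \eqref{>>}); a one-dimensional estimate along the passages bounding $\frac{(d\e)^{n-1}|D|}{h\e}\sum_i|c_i\e-a_i\e|^2$ by the Dirichlet energy, where $a_i\e$ is the cell average of the trace $u\e|_\Gamma$; and the negligibility of the total volume of the passages. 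Passing to a subsequence, $u\e|_\Omega\rightharpoonup u_1$ in $H^1(\Omega)$ (hence $u\e\to u_1$ in $L_2(\Omega)$ and $u\e|_\Gamma\to u_1|_\Gamma$ in $L_2(\Gamma)$), while the piecewise-constant function $u_2\e$ assembled from the room values $c_i\e$ stays bounded in $L_2(\Gamma)$. I test the identity $\eta\e[u\e,v]=\lambda\e(u\e,v)_{\mathcal{H}\e}$ against $v\e$ built as in (a) from an arbitrary smooth pair $V=(v_1,v_2)$ and pass to the limit using $\frac{(d\e)^{n-1}|D|}{h\e}=q\e r\e\eps^{n-1}$, $\varrho\e(b\e)^n|B|=r\e\eps^{n-1}$ and \eqref{qere}: for $q<\infty,\ r>0$ this yields $\eta_{qr}[U,V]=\lambda(U,V)_{\mathcal{H}}$ with $U=(u_1,u_2)$, so $\lambda\in\sigma(\mathcal{A}_{qr})$; for $q=\infty,\ r>0$ the bounded passage energy and $q\e\to\infty$ force $c_i\e-a_i\e\to0$, so $u_2=u_1|_\Gamma$, $U\in\mathrm{dom}(\widetilde\eta)$, and $\lambda\in\sigma(\mathcal{A}_r)$; when $r=0$ the $\Gamma$-contributions to the limiting form and norm drop out and one recovers $\sigma(\mathcal{A})\cup\{q\}$ if $q<\infty$, and $\sigma(\mathcal{A})$ if $q=\infty$. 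To see that $\lambda$ is a genuine eigenvalue I need $U\ne0$; for $\lambda\ne q$ this follows by extracting from the eigenvalue equation, tested against one-room functions, the local balance $c_i\e\approx\frac{q\e}{q\e-\lambda\e}a_i\e$ with square-summable error (the coefficient $\frac{q\e}{q\e-\lambda\e}$ being uniformly bounded), which upgrades $u_2\e$ to a strongly convergent sequence in $L_2(\Gamma)$, so the limit carries the full mass; concentration of $u\e$ entirely in the rooms is excluded because it would force $\lambda\e\to q$ (and $\lambda\e\to\infty$ when $q=\infty$, impossible since $\lambda\e\in l$). The case $\lambda=q$ is trivial, as $q\in\sigma_0$ by construction.

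The step I expect to be the main obstacle is precisely the passage analysis in (b): one must show that the Dirichlet energy trapped in the thin passages converges to exactly the coupling term $qr\int_\Gamma|u_1-u_2|^2\d s$ --- neither more nor less --- which requires carefully matching the near-boundary behaviour of $u\e$ in $\Omega$ with the cell-averaged traces and controlling the errors from the non-cylindrical passage ends, and, for the strong convergence of the room values and the non-triviality of the limit, extracting the ``zero-dimensional'' balance in each room with a summable remainder. It is here that the lower bound \eqref{>>} on $d\e$ is essential, and it is this balance that cleanly separates the four regimes according to the finiteness of $q$ and the positivity of $r$.
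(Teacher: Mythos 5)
Your reduction to the two properties, and your treatment of property (A) (your part (b)), are essentially sound and close to the paper's argument; in fact your handling of the degenerate case where the weak limit $u_1$ vanishes --- excluding $\lambda\ne q$ via the local balance $(q\e-\lambda\e)\langle u\e\rangle_{B_i\e}\approx q\e\langle u\e\rangle_{\Gamma_i\e}$ obtained by testing against one-room functions --- is a legitimate and somewhat lighter alternative to the paper's decomposition $u\e=v\e-g\e+w\e$ with projection onto the lower eigenfunctions. The genuine gap is in your part (a). First, the claim that a single test function with Rayleigh quotient $q\e\to q$ yields $\mathrm{dist}(q,\sigma(\mathcal{A}\e))\to 0$ is a non sequitur: a vector with Rayleigh quotient $\mu$ only shows $\inf\sigma(\mathcal{A}\e)\le\mu$, which is vacuous here since $0\in\sigma(\mathcal{A}\e)$. (For $A=\mathrm{diag}(0,2)$ and $u=(1,1)/\sqrt2$ the Rayleigh quotient is $1$ while $\mathrm{dist}(1,\sigma(A))=1$; even a large orthogonal family of such vectors does not help.) Localizing spectrum near $q$ requires controlling the residual in (operator or form) norm, i.e.\ a genuine quasimode, precisely what you declare unnecessary. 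This matters most in the regime $q<\infty$, $r=0$ with $q\notin\sigma(\mathcal{A})$, where $q$ is an isolated point of $\sigma_0$ and cannot be recovered as a limit of the discrete eigenvalues $\lambda_k^+$.

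Second, even for $\lambda$ in the discrete spectrum of the limit operator, your approximate eigenfunction $u\e$ does not belong to $\mathrm{dom}(\mathcal{A}\e)$ (the normal derivatives do not match at the passage ends, and $\partial u_1/\partial n=\mathcal{F}(\lambda)u_1\ne 0$ on $\Gamma$), so $\|\mathcal{A}\e u\e-\lambda u\e\|_{\mathcal{H}\e}$ is not even defined and ``the spectral theorem'' cannot be invoked directly. One must pass to a form-based quasimode criterion, and then the residual $w\mapsto\eta\e[u\e,w]-\lambda(u\e,w)_{\mathcal{H}\e}$ contains the mismatch between the flux $\mathcal{F}(\lambda)u_1$ distributed over all of $\Gamma$ and the flux carried through the tiny sets $\checkD$; this functional is bounded only in terms of $\|\nabla w\|_{L_2(\Omega\e)}$, not of $\|w\|_{\mathcal{H}\e}$, and estimating it for arbitrary $w$ requires the full strength of Lemmas \ref{lm2}--\ref{lm3} --- none of which appears in your sketch, although it is the hardest part of the proof. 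The paper sidesteps all of this with an indirect argument: assuming $(\lambda-\delta,\lambda+\delta)\cap\sigma(\mathcal{A}\e)=\varnothing$ along a subsequence, it picks $F\notin\mathrm{im}(\mathcal{A}_{qr}-\lambda\mathrm{I})$ (such $F$ exists for every spectral point, discrete or essential), solves $\mathcal{A}\e u\e-\lambda u\e=f\e$ with uniform bounds supplied by the assumed spectral gap, passes to the limit with the same structured test functions used for property (A), and contradicts the choice of $F$. You should either adopt that route or supply the form-based quasimode lemma together with the residual estimates it requires.
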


\begin{remark}
It is straightforward to show that the claim of Theorem \ref{th1} is equivalent
to the fulfillment of the following two properties:
\begin{gather}\tag{A}\label{ah}
\text{if }\lambda\e\in\sigma(\mathcal{A}\e)\text{ and }\liml_{\eps\to
0}\lambda\e=\lambda\text{ then }\lambda\in
\sigma_0,\\\tag{B}\label{bh} \text{for any }\lambda\in
\sigma_0\text{ there exists }\lambda\e\in\sigma(\mathcal{A}\e)\text{ such
that }\liml_{\eps\to 0}\lambda\e=\lambda.
\end{gather}
\end{remark}
}

Theorem \ref{th1} will be proved in the next section.
The proof is based on a substitution of suitable
test functions into the variational formulation of the spectral
problem (see equality \eqref{int_eq} below) -- as in the energy method
using for classical homogenization problems (see, e.g., 
\citep{CioDon,Sanchez}).

Before to proceed to the proof of Theorem \ref{th1} we obtain an estimate concerning the 
behaviour of the $k$-th eigenvalue of $\mathcal{A}\e$. 

\begin{theorem}\label{th2}Let $q<\infty$. Then one has
\begin{gather}\label{qqq}
\sup\limits_{k\in\mathbb{N}}\left(\underset{\eps\to 0}{\overline{\lim}}\lambda_k\e\right)\leq q,
\end{gather}
where $\{\lambda_k^\eps\}_{k\in\mathbb{N}}$ is a
sequence of eigenvalues of the operator $\mathcal{A}\e$ written in the ascending order and repeated according to multiplicity.

\end{theorem}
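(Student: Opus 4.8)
The plan is to exploit the min–max characterization of the eigenvalues $\lambda_k^\eps$ together with a family of test functions supported essentially in the "rooms". Recall that
$$
\lambda_k^\eps=\min_{\substack{L\subset H^1(\Omega\e)\\ \dim L=k}}\ \max_{\substack{u\in L\\ u\neq 0}}\ \frac{\eta\e[u,u]}{\|u\|_{\mathcal H\e}^2}
=\min_{\substack{L\subset H^1(\Omega\e)\\ \dim L=k}}\ \max_{\substack{u\in L\\ u\neq 0}}\ \frac{\int_{\Omega\e}|\nabla u|^2\,\d x}{\int_{\Omega\e}|u|^2\rho\e\,\d x}.
$$
Fix $k\in\mathbb N$. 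Since $N(\eps)\to\infty$, for all small $\eps$ we may pick $k$ distinct indices $i_1,\dots,i_k\in\I$, and associate to each $i_m$ a trial function $w_m\e\in H^1(\Omega\e)$ which equals $1$ on the "room" $B_{i_m}\e$, is supported in $B_{i_m}\e\cup T_{i_m}\e$ (vanishing near the bottom face $x_n=0$ of the passage, hence extendable by zero to all of $\Omega\e$), and depends only on $x_n$ inside the passage $T_{i_m}\e$, interpolating linearly from $0$ to $1$ across the passage. These functions have pairwise disjoint supports, so $L\e:=\operatorname{span}\{w_1\e,\dots,w_k\e\}$ is $k$-dimensional and, for any $u=\sum_m c_m w_m\e\in L\e$,
$$
\frac{\eta\e[u,u]}{\|u\|_{\mathcal H\e}^2}
=\frac{\sum_m |c_m|^2\,\eta\e[w_m\e,w_m\e]}{\sum_m |c_m|^2\,\|w_m\e\|_{\mathcal H\e}^2}
\le \max_{1\le m\le k}\ \frac{\eta\e[w_m\e,w_m\e]}{\|w_m\e\|_{\mathcal H\e}^2}.
$$
Therefore $\lambda_k^\eps\le \max_m R_m\e$ where $R_m\e:=\eta\e[w_m\e,w_m\e]/\|w_m\e\|_{\mathcal H\e}^2$, and it remains to estimate the individual Rayleigh quotients.

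Next I would compute both pieces of $R_m\e$. Since $w_m\e$ is constant on the room and the linear-in-$x_n$ profile on the passage $\simeq d\e D\times[0,h\e]$ has gradient of modulus $1/h\e$, we get
$$
\eta\e[w_m\e,w_m\e]=\int_{T_{i_m}\e}|\nabla w_m\e|^2\,\d x=\frac{1}{(h\e)^2}\cdot |T_{i_m}\e|=\frac{(d\e)^{n-1}|D|\,h\e}{(h\e)^2}=\frac{(d\e)^{n-1}|D|}{h\e},
$$
while, using $\rho\e=\varrho\e$ on the room and $w_m\e\equiv 1$ there,
$$
\|w_m\e\|_{\mathcal H\e}^2=\int_{B_{i_m}\e}\varrho\e\,\d x+\int_{T_{i_m}\e}|w_m\e|^2\,\d x
\ge \varrho\e|B_{i_m}\e|=\varrho\e(b\e)^n|B|.
$$
Hence $R_m\e\le \dfrac{(d\e)^{n-1}|D|}{h\e\,\varrho\e(b\e)^n|B|}=q\e$ for every $m$, uniformly in $m$, so $\lambda_k^\eps\le q\e$ for all sufficiently small $\eps$. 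Passing to the upper limit gives $\overline{\lim}_{\eps\to 0}\lambda_k^\eps\le \lim_{\eps\to 0}q\e=q$, and since the bound $q$ is independent of $k$, taking the supremum over $k\in\mathbb N$ yields \eqref{qqq}.

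I do not expect a genuine obstacle here; the argument is a clean min–max test-function estimate. The only points requiring a little care are: (i) checking that the $w_m\e$ genuinely lie in $H^1(\Omega\e)$ — this needs the cutoff near $x_n=0$ so that the extension by zero into $\Omega$ is $H^1$, and one should verify that the small correction layer near the bottom of the passage contributes a lower-order term to $\eta\e[w_m\e,w_m\e]$ (its gradient energy is $O\big((d\e)^{n-1}(h\e)^{-1}\big)$ times a factor that can be made $\to 0$, e.g. by using a layer of thickness comparable to $h\e$), so the estimate $R_m\e\le q\e(1+o(1))$ still holds; and (ii) keeping the supports genuinely disjoint, which is guaranteed by the geometric assumptions \eqref{ass11}–\eqref{ass14} ensuring neighbouring rooms and passages do not meet. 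Both are routine given the setup already in place.
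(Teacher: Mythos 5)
Your proposal is correct and follows essentially the same route as the paper: the min--max principle applied to the span of $k$ test functions, each equal to $1$ on a room, linear in $x_n$ on the corresponding passage, and zero elsewhere, yielding the Rayleigh-quotient bound $q\e(1+o(1))$. Your worry in point (i) is unnecessary: since the linear profile $x_n/h\e$ already vanishes on the bottom face of the passage, the extension by zero into $\Omega$ is continuous and piecewise smooth, hence in $H^1(\Omega\e)$ with no correction layer needed --- which is exactly the test function the paper uses.
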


\begin{proof}
By the min-max principle (cf. \citep[\S 4.5]{Davies})
\begin{gather}\label{minmax+}
 \lambda_k\e=\inf\limits_{L\in \mathcal{L}_k}\left(\sup\limits_{0\not= v\in 
L}{\|\nabla v\|^2_{L_2(\Omega\e)}\over \|v\|^2_{\mathcal{H}\e}}\right),
\end{gather}
where $\mathcal{L}_k$ is a set of all $k$-dimensional subspaces in $\mathrm{dom}(\eta\e)$.

Let $\mathbf{i}\e_j\in\I$, $j=1,\dots k$ be arbitrary pairwise non-equivalent indices.
We introduce the following functions:
\begin{gather*}
v_j\e(x)=\begin{cases}
1,& x\in B\e_{\mathbf{i}\e_j},\\
\ds{x_n\over h\e},& x=(x',x_n)\in T\e_{\mathbf{i}\e_j},\\
0,&\text{otherwise}.
\end{cases}
\end{gather*}
We denote 
$$L':=\mathrm{span}\{v_j\e,\ j=1,\dots,k\}.$$
It is clear that $L'\subset H^1(\Omega)$ and $\mathrm{dim}(L')=k$, hence $L'\in\mathcal{L}_k$. Also it is easy to get that
\begin{gather}\label{estv1}
\|\nabla v_j\e\|^2_{L_2(\Omega\e)}={(d\e)^{n-1} |D|\over h\e},
\\\label{estv2}
\| v_j\e\|^2_{\mathcal{H}\e}=\varrho\e (b\e)^n|B|+{1\over 3}(d\e)^{n-1}|D|h\e
\end{gather}
and as a result 
\begin{gather}\label{frac}
{\|\nabla v_j\e\|^2_{L_2(\Omega\e)}\over \|v_j\e\|^2_{\mathcal{H}\e}}
={(d\e)^{n-1} |D|\over h\e \left(\varrho\e(b\e)^n|B|+{1\over 3}(d\e)^{n-1}|D|h\e\right) }=q\e\left (1+{1\over 3}q\e (h\e)^2\right)^{-1}. 
\end{gather}
Since the supports of $v_j\e$ are pairwise disjoint and \eqref{estv1}-\eqref{estv2} are independent of $j$, then, obviously, \eqref{frac} is valid  for an arbitrary $v\in {L}'$ instead of $v_j\e$. Using \eqref{minmax+} and taking into account the finiteness of $q$ and \eqref{ass15}, we obtain
\begin{gather*}
\lambda_k\e\leq \sup\limits_{v\in L'}{\|\nabla v\|^2_{L_2(\Omega\e)}\over \|v\|^2_{\mathcal{H}\e}}=q\e\left (1+{1\over 3}q\e (h\e)^2\right)^{-1}\sim q\text{ as }\eps\to 0, 
\end{gather*}
which implies the statement of the theorem.
\end{proof}

\begin{corollary}
$\text{ If }q=0\text{ then for each }k\in\mathbb{N}\ \lambda_k\e\to 0\text{ as 
}\eps\to 0.$
\end{corollary}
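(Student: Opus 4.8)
The plan is to derive the corollary immediately from Theorem \ref{th2}. Since $q=0$ is finite, Theorem \ref{th2} applies and yields
$$\sup\limits_{k\in\mathbb{N}}\left(\underset{\eps\to 0}{\overline{\lim}}\lambda_k\e\right)\leq q=0,$$
so in particular $\underset{\eps\to 0}{\overline{\lim}}\,\lambda_k\e\leq 0$ for every fixed $k$. On the other hand, the form $\eta\e$ is positive, hence every eigenvalue $\lambda_k\e$ of $\mathcal{A}\e$ is non-negative, which gives $\underset{\eps\to 0}{\underline{\lim}}\,\lambda_k\e\geq 0$. Combining the two bounds forces the limit (not merely the upper limit) to exist and to equal zero: $\liml_{\eps\to 0}\lambda_k\e=0$.

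Alternatively, one may bypass the statement of Theorem \ref{th2} and argue directly from its proof: for each fixed $k$ the test functions $v_j\e$, $j=1,\dots,k$, span a $k$-dimensional subspace $L'\subset H^1(\Omega\e)$ on which the Rayleigh quotient equals $q\e\bigl(1+\tfrac13 q\e(h\e)^2\bigr)^{-1}$, as computed in \eqref{frac}; when $q=0$ we have $q\e\to 0$, so this quantity tends to $0$, and the min-max principle \eqref{minmax+} bounds $\lambda_k\e$ above by it. Together with $\lambda_k\e\geq 0$ this again gives $\lambda_k\e\to 0$. This route makes transparent that the hypothesis $q=0$ is exactly what is being used.

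There is no genuine obstacle here: the only point worth flagging is that the conclusion is convergence, not merely a bound on the upper limit, and this upgrade is obtained for free from the non-negativity of the spectrum of $\mathcal{A}\e$.
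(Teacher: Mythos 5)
Your proposal is correct and is exactly the intended argument: the paper states this as an immediate corollary of Theorem \ref{th2}, and the only (trivial) extra ingredient is the non-negativity of $\sigma(\mathcal{A}\e)$ coming from the positivity of the form $\eta\e$, which you correctly supply to upgrade the bound on $\overline{\lim}$ to genuine convergence. No further comment is needed.
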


\begin{remark}
In the case $q>0$, $r>0$  \eqref{qqq} follows easily from Theorem \ref{th1} and Lemma \ref{lm1}.
\end{remark}

{\color{black}
\begin{remark}\label{remark_q}
The meaning of the parameter $r$ has been explained in the introduction. 
The parameter $q$ characterizes the ``strength'' of coupling between $\Omega$ 
and the union of ``rooms''. If $q=\infty$ the coupling is strong:
given a family $\left\{w\e\in H^1(\Omega\e)\right\}_{\eps>0}$ satisfying 
\begin{gather}\label{grad}
\|\nabla w\e\|^2_{L_2(\Omega\e)}\leq C
\end{gather}
one can observe that the behaviour as 
$\eps\to 
0$ of $w\e$ on $\cupl_i 
B_i\e$ is 
determined by that one on $\Gamma$. Namely (cf. 
\eqref{u1u2+}),
$$\liml_{\eps\to 0}\left\|\sqrt{r\e}\hspace{1mm} w\e|_\Gamma - \Pi\e 
w\e\right\|_{L_2(\Gamma)}=0,$$
where $w\e|_\Gamma$ is the trace of $w\e$ on $\Gamma$, and 
$\Pi\e:L_2(\cupl_{i}B_i\e)\to L_2(\Gamma)$ is 
a certain bounded 
operator (see Subsection \ref{subsec22}) such that if \eqref{grad} holds then 
$$\|\Pi\e w\e\|^2_{L_2(\Gamma)}\sim  
\suml_{i\in\I}\intl_{B_i\e}|w\e|^2\varrho\e \d x\text{ as }\eps\to 0.$$
On the other hand, if $q$ is finite then the coupling is weak: 
for an arbitrary smooth functions $w_1$, $w_2$ on $\Gamma$ 
one can construct a family 
$\left\{w\e\in H^1(\Omega\e)\right\}_{\eps>0}$ satisfying \eqref{grad} and
$$ 
w\e|_{\Gamma}\to w_1,\ \Pi\e w\e\to w_2\text{ in }L_2(\Gamma)\text{ as }\eps\to 0.$$
\end{remark}}

{\color{black}
\begin{remark}
Condition \eqref{ass2} is rather technical. It is needed to 
control the behaviour of eigenfunctions of the operator $\mathcal{A}\e$ near 
the bottom and the top of the passages -- see estimates \eqref{est1}-\eqref{est2} below. Roughly speaking, eigenfunctions, 
approaching the passage bottom from within $\Omega$ or the passage top from 
within the room, cannot  abruptly ``jump''. 
Apparently, the refusal of this 
condition will not lead to qualitatively new results -- the limit 
spectral problem will still have one of the four forms described by Theorem \ref{th1}, but the coefficient $q$ will be defined by another formula.
\end{remark}
}

\section{\label{sec2}Proof of the main results}

\subsection{\label{subsec21}Preliminaries}

In what follows by $C,C_1,C_2...$ we denote generic constants that do
not depend on $\eps$.

If $G$ is an open domain in $\mathbb{R}^n$ then by $\langle u \rangle_G$ we denote the normalized mean value of the function $u(x)$ in the domain $G$,
$$\langle u \rangle_G={1\over |G|}\intl_G u(x) \d x.$$
If $\Sigma$ is an $(n-1)$-dimensional hypersurface in $\mathbb{R}^n$ then again by $\langle u\rangle_\Sigma$ we denote the normalized mean value of the function $u$ over $\Sigma$, i.e.
$$\langle u\rangle_\Sigma=\ds{1\over |\Sigma|}\intl_{\Sigma}u \d s,\quad |\Sigma|=\intl_{\Sigma}\d s.$$

Next we introduce the following sets for $i\in\I$:
\begin{itemize}

\item $\checkD=\left\{x\in\partial T_i\e:\ x_n=0\right\},$

\item $\hatD=\left\{x\in\partial T_i\e:\ x_n=h\e\right\},$

\item $Y_i\e=\left\{x=(x_1,\dots,x_n)\in\mathbb{R}^n:\ |x_k-(x^{i,\eps})_k|<{\eps\over 2},\ k=1,\dots ,n-1,\ -{\eps\over 2}<x_n<0\right\}$, where $(x^{i,\eps})_k$ is the $k$-th coordinate of $x^{i,\eps}$,

\item $\Gamma_i\e=\left\{x=(x_1,\dots,x_n)\in\mathbb{R}^n:\ |x_k-(x^{i,\eps})_k|<{\eps\over 2},\ k=1,\dots ,n-1,\ x_n=0\right\}.$

\end{itemize}
We have
\begin{gather}\label{disttogamma1}\cupl_{i\in\I}Y_i\e\subset\Omega,\\
\label{disttogamma2}
\cupl_{i\in\I}\Gamma_i\e\subset\Gamma,\quad \liml_{\eps\to 0}\left|\Gamma\setminus \cupl_{i\in\I}\Gamma_i\e\right|=0
\end{gather}
(recall that  the set $\I$ consists of $i\in \mathbb{Z}^{n-1}$  satisfying  $x^{i,\eps}\in\Gamma$ and $\mathrm{dist}(x^{i,\eps},\partial\Omega\setminus\Gamma)\geq \eps {\sqrt{n}\over 2}$, whence one can easily obtain \eqref{disttogamma1}-\eqref{disttogamma2}). 

Further, we present several estimates which will be widely used in the proof of Theorem \ref{th1}.

\begin{lemma}\label{lm2}
One has for $i\in\I$:
\begin{gather}\label{est1}
\forall u\in H^1(Y_i\e):\quad \left|\<u\>_{\checkD}-\<u\>_{\Gamma_{i}^{\eps}}\right|^2\leq C_1 \mathbf{D}\e\|\nabla u\|^2_{L_2(Y_i\e)},\\
\label{est2}\forall u\in H^1(B_i\e):\quad 
\left|\<u\>_{\hatD}-\<u\>_{B_{i}^{\eps}}\right|^2\leq C_2 \mathbf{D}\e\|\nabla u\|^2_{L_2(B_i\e)},
\end{gather}
where $\mathbf{D}\e$ is defined by formula \eqref{De}.
\end{lemma}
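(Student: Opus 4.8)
The plan is to prove the two estimates \eqref{est1} and \eqref{est2} by the same scheme: in each case we must compare the mean value of $u$ over a small $(n-1)$-dimensional disk $\widecheck{D}{_i\e}$ (resp. $\widehat{D}{_i\e}$) of radius $\sim d\e$ with the mean value of $u$ over a much larger object, a full $(n-1)$-cube $\Gamma_i\e$ of side $\eps$ (resp. the room $B_i\e$ of diameter $\sim b\e$), the difference being controlled by the Dirichlet energy of $u$ on the intermediate $n$-dimensional block $Y_i\e$ (resp. $B_i\e$). The key analytic fact behind both is the standard capacity-type estimate: for a function $u\in H^1$ on a ball (or cube) in $\mathbb{R}^n$, the squared difference of its averages over two concentric $(n-1)$-dimensional spheres of radii $\varrho_1<\varrho_2$ is bounded by $C\|\nabla u\|^2_{L_2}$ times the factor $\int_{\varrho_1}^{\varrho_2} t^{1-n}\d t$, which is $\sim|\ln(\varrho_1/\varrho_2)|$ when $n=2$ and $\sim \varrho_1^{2-n}$ when $n>2$ --- precisely the quantity $\mathbf{D}\e$ after rescaling.

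First I would treat \eqref{est1}. By rescaling $x\mapsto (x-x^{i,\eps})/\eps$ we may fix $\eps=1$; the disk $\widecheck{D}{_i\e}$ becomes a disk of radius $d\e/\eps$ centred at the origin on the face $\{x_n=0\}$, and $\Gamma_i\e$ becomes the unit cube face. Introduce an intermediate sphere (half-sphere) of fixed radius $\varrho_0\in(0,1/2)$ inside the half-cube $Y:=(-1/2,1/2)^{n-1}\times(-1/2,0)$. One splits $\bigl|\<u\>_{\widecheck D}-\<u\>_{\Gamma_i\e}\bigr|^2 \leq 2\bigl|\<u\>_{\widecheck D}-\<u\>_{S_{\varrho_0}}\bigr|^2 + 2\bigl|\<u\>_{S_{\varrho_0}}-\<u\>_{\Gamma_i\e}\bigr|^2$. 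The second term is bounded by $C\|\nabla u\|^2_{L_2(Y)}$ by a fixed (i.e., $\eps$-independent) trace/Poincaré inequality on the half-cube, since $\varrho_0$ is fixed. For the first term one uses polar coordinates centred at the origin: writing $\<u\>_{S_t}$ for the spherical average at radius $t$, one has $\frac{d}{dt}\<u\>_{S_t}$ estimated so that $\bigl|\<u\>_{S_{\varrho_1}}-\<u\>_{S_{\varrho_2}}\bigr|^2 \leq \bigl(\int_{\varrho_1}^{\varrho_2} t^{1-n}\d t\bigr)\bigl(\int_{\varrho_1}^{\varrho_2} t^{n-1}|\partial_t\<u\>_{S_t}|^2\,\omega_{n-1}\,\d t\bigr)\leq C\bigl(\int_{\varrho_1}^{\varrho_2}t^{1-n}\d t\bigr)\|\nabla u\|^2_{L_2}$ by Cauchy--Schwarz and Jensen; finally one absorbs the difference between the average over the disk $\widecheck D$ and the average over the corresponding half-sphere by another fixed local estimate. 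Undoing the rescaling (which multiplies $\|\nabla u\|^2_{L_2}$ by $\eps^{n-2}$ and is harmless after comparison with $\mathbf D\e$, or more cleanly: the estimate is scale-invariant up to the $t^{1-n}$ integral, which produces exactly $|\ln(d\e/\eps)|\sim|\ln d\e|$ for $n=2$ and $(d\e/\eps)^{2-n}\lesssim(d\e)^{2-n}$ for $n>2$) yields \eqref{est1}.

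The proof of \eqref{est2} is entirely analogous: rescale $B_i\e$ to the fixed domain $B$ by $x\mapsto(x-\tilde x^{i,\eps})/b\e$, note that $\widehat D{_i\e}$ rescales to a disk of radius $d\e/b\e$ sitting on the flat piece $\{|x'|<R,\ x_n=0\}\subset\partial B$ guaranteed by \eqref{ass12}--\eqref{ass13}, and run the same spherical-averaging argument inside $B$ between the small disk and a fixed sphere, then a fixed Poincaré inequality on $B$ to reach $\<u\>_B$. Here the radial integral gives $|\ln(d\e/b\e)|$ (for $n=2$) or $(d\e/b\e)^{2-n}$ (for $n>2$); using $d\e\leq b\e$ from \eqref{ass14} these are dominated by $\mathbf D\e$, so \eqref{est2} follows. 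The main technical obstacle is the careful bookkeeping of how the radial ("capacity") integral $\int t^{1-n}\d t$ scales and interacts with the rescaling of the Dirichlet integral --- one has to be slightly careful that the logarithmic/power factor comes out as $\mathbf D\e$ and not as something that degenerates; everything else is a combination of standard fixed-domain trace and Poincaré inequalities, whose constants are $\eps$-independent precisely because, after rescaling, we work on the fixed reference domains $Y=(-\tfrac12,\tfrac12)^{n-1}\times(-\tfrac12,0)$ and $B$.
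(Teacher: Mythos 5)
Your proposal is correct and follows essentially the same route as the paper: the difference of averages is split through intermediate (half-)spherical averages, the radial integration in spherical coordinates produces the capacity factor $\int t^{1-n}\,\d t\sim\mathbf{D}\e$, and rescaled trace plus Poincar\'e inequalities handle the comparison at the small scale $d\e$ and at the scale $\eps$ (resp.\ $b\e$ for \eqref{est2}). The only difference is presentational --- you rescale to a fixed reference cell and use a fixed intermediate radius $\varrho_0$, whereas the paper works in physical variables with half-spheres of radii $d\e/2$ and $\eps/2$ --- and your scaling bookkeeping for why the factor comes out as $\mathbf{D}\e$ is the correct one.
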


\begin{proof} We present the proof of \eqref{est1} only. The proof of \eqref{est2} uses the same ideas and needs only some slight modifications. 

Using density arguments one concludes that it is enough to prove \eqref{est1} only for smooth functions. Let $u\in C^1(\overline{Y_i\e})$.
We denote:
\begin{itemize}
\item $F=\left\{x\in\mathbb{R}^n:\ |x|<{1\over 2},\ x_n<0\right\}$,

\item $D_0=\{x\in\mathbb{R}^n:\ x'\in D,\ x_n=0 \}$,

\item $F_i\e=\left\{x\in\Omega:\ |x-x^{i,\eps}|<{d\e\over 2}\right\}$, 

\item $S_i\e=\left\{x\in\Omega:\ |x-x^{i,\eps}|={d\e\over 2}\right\}$,

\item $R_i\e=\left\{x\in\Omega:\ {d\e\over 2}<|x-x^{i,\eps}|<{\eps\over 2}\right\}$,

\item $C_i\e=\left\{x\in\Omega:\ |x-x^{i,\eps}|={\eps\over 2}\right\}$.
\end{itemize}

In view of \eqref{ass13} one has $D_0\subset\partial F$.
For an arbitrary function $v\in H^1(F)$ one has the standard trace inequality:
\begin{gather}\label{trace}
\|v\|^2_{L_2(D_0)}\leq C\|v\|^2_{H^1(F)}.
\end{gather}
Then via the change of variables $x\mapsto d\e x+x_i\e$ (mapping $D_0$ onto $\checkD$ and $F$ onto $F_i\e$) one can easily obtain from \eqref{trace}:
\begin{gather}\label{tr1}
\forall u\in H^1(F_i\e):\ \|u\|^2_{L_2(\checkD)}\leq C\left((d\e)^{-1}\|u\|^2_{L_2(F_i\e)}+
d\e\|\nabla u\|^2_{L_2(F_i\e)}\right).
\end{gather}
In a similar way we also get
\begin{gather}
\label{tr2}
\forall u\in H^1(F_i\e):\ \|u\|^2_{L_2(S_i\e)}\leq C\left((d\e)^{-1}\|u\|^2_{L_2(F_i\e)}+
d\e\|\nabla u\|^2_{L_2(F_i\e)}\right).
\end{gather}
Then, using the Cauchy inequality,  \eqref{tr1}-\eqref{tr2} and the Poincar\'{e} inequality 
$$\|u-\langle u\rangle_{F_i\e}\|^2_{L_2(F_i\e)}\leq C (d\e)^2 \|\nabla u\|^2_{L_2(F_i\e)}$$
we obtain:
\begin{multline}\label{tr3}
\left|\langle u\rangle_{\checkD}-\langle u\rangle_{S_i\e}\right|^2\leq 2
\left|\langle u\rangle_{\checkD}-\langle u\rangle_{F_i\e}\right|^2+
2\left|\langle u\rangle_{S_i\e}-\langle u\rangle_{F_i\e}\right|^2\leq 
{2\over |\checkD|}\|u-\langle u\rangle_{F_i\e}\|^2_{L_2(\checkD)}+
{2\over |S_i\e|}\|u-\langle u\rangle_{F_i\e}\|^2_{L_2(S_i\e)}\\\leq
C (d\e)^{1-n}\left((d\e)^{-1}\|u-\langle u\rangle_{F_i\e}\|^2_{L_2(F_i\e)}+d\e\|\nabla u\|^2_{L_2(F_i\e)}\right)\leq C_1(d\e)^{2-n}\|\nabla u\|^2_{L_2(F_i\e)}.
\end{multline}

By $\Sigma_{n-1}$ we denote $(n-1)$-dimensional unit half-sphere and 
introduce spherical coordinates $(\phi,r)$ in $R_i\e$. Here
$\phi=(\phi_1,{\dots},\phi_{n-1})\in \Sigma_{n-1}$ are the angular coordinates, $r\in \left({d\e\over 2},{\eps\over 2}\right)$ is a distance to $x^{i,\eps}$.

Let $x=(\phi,d\e/2)\in S_i\e$, $y=(\phi,\eps/2)\in C_i\e$. We have
\begin{gather*}
u(y)-u(x)=\intl_{0}^{{\eps\over 2}-{d\e\over 2}}{\partial
u(\xi(\tau))\over\partial\tau}\d\tau,\text{ where }\xi(\tau)=x+{\tau\over {\eps\over 2}-{d\e\over 2}}(x-y).
\end{gather*}
Then we integrate this equality over
$\Sigma_{n-1}$ with respect to $\phi$, divide by
$|\Sigma_{n-1}|$ and square. Using the Cauchy inequality we obtain:
\begin{multline}\label{tr4}
\left|\langle u\rangle_{C_i\e}-\langle
u\rangle_{S_i\e}\right|^2=
{1\over|\Sigma_{n-1}|^2}\left|\hspace{2pt}
\intl_{\Sigma_{n-1}}\intl_{0}^{{\eps\over 2}-{d\e\over 2}}{\partial
u(\xi(\tau))\over\partial\tau}\d\tau \d\phi
\right|^2\leq
C
\left\{\intl_{0}^{{\eps\over 2}-{d\e\over 2}}\left(\tau+{d\e\over 2}\right)^{1-n}{\d\tau}\right\}\\\times\left\{\intl_{\Sigma_{n-1}}\intl_{0}^{{\eps\over 2}-{d\e\over 2}}\left|{\partial
u(\xi(\tau))\over\partial\tau}\right|^2\left(\tau+{d\e\over 2}\right)^{n-1}\d\tau
\d\phi\right\}\leq C_1 \mathbf{D}\e {\|\nabla
u\|^2_{L_2(R_i\e)}}.
\end{multline}

Finally, using the same idea as in the proof of \eqref{tr3}, we obtain the estimate
\begin{gather}\label{tr5}
\left|\langle u\rangle_{C_i\e}-\langle u\rangle_{\Gamma_i\e}\right|^2\leq C\eps^{2-n}\|\nabla u\|^2_{L_2(Y_i^{\eps})}.
\end{gather}
Combining \eqref{tr3}-\eqref{tr5}  and taking into account that $(d\e)^{2-n}+\eps^{2-n}\leq 2\mathbf{D}\e$ we obtain \eqref{est1}. 
\end{proof}
\begin{lemma}\label{lm3}
One has  for $i\in \I$:
\begin{gather}\label{est3}
\forall u\in H^1(T_i\e):\quad \left|\<u\>_{\checkD}-\<u\>_{\hatD}\right|^2\leq {C\over q\e r\e\eps^{n-1}}\|\nabla u\|^2_{L_2(T_i\e)},
\end{gather}
where $q\e$ and $r\e$ are defined by \eqref{qere}.
\end{lemma}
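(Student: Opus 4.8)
The plan is to reduce the estimate to a one-dimensional Newton--Leibniz identity along the axis of the passage. By density it suffices to treat $u\in C^1(\overline{T_i\e})$. Recall that $T_i\e$ is a straight cylinder: at every height $x_n\in[0,h\e]$ its cross-section is the translate $\eps i+d\e D$, whose $(n-1)$-dimensional measure equals $(d\e)^{n-1}|D|$; moreover $\checkD$ and $\hatD$ are precisely its bottom and top faces, so both mean values $\<u\>_{\checkD}$ and $\<u\>_{\hatD}$ are integrals of $u$ over this cross-section, normalized by $(d\e)^{n-1}|D|$.

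First I would write, for each point $x'$ of the cross-section, $u(x',h\e)-u(x',0)=\intl_0^{h\e}\partial_{x_n}u(x',t)\,\d t$, average this identity over the cross-section, and apply the Cauchy--Schwarz inequality over the whole cylinder:
\[
\left|\<u\>_{\hatD}-\<u\>_{\checkD}\right|^2
=\left|{1\over (d\e)^{n-1}|D|}\intl_{T_i\e}\partial_{x_n}u\,\d x\right|^2
\leq {|T_i\e|\over \left((d\e)^{n-1}|D|\right)^2}\,\|\partial_{x_n}u\|^2_{L_2(T_i\e)}.
\]
Since $|T_i\e|=(d\e)^{n-1}|D|h\e$, the prefactor equals ${h\e\over (d\e)^{n-1}|D|}$, and by the definitions \eqref{qere} of $q\e$ and $r\e$ one checks directly that this is exactly $\left(q\e r\e\eps^{n-1}\right)^{-1}$. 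Bounding $|\partial_{x_n}u|\leq|\nabla u|$ then yields \eqref{est3}, in fact with $C=1$.

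I do not expect any real obstacle here. In contrast with Lemma \ref{lm2}, where passing between the passage mouths and the surrounding cubes $\Gamma_i\e$ (resp.\ the rooms $B_i\e$) forced us to estimate across spherical shells and produced the factor $\mathbf{D}\e$, the passage itself is a straight tube, so a single integration along its axis suffices. The only point to be careful about is the bookkeeping of the geometric constants, so that the final bound appears in the normalization stated in the lemma.
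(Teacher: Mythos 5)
Your proposal is correct and follows essentially the same route as the paper: reduce to $C^1$ by density, apply the Newton--Leibniz formula along the axis of the cylinder, average over the cross-section $d\e D+\eps i$, and use the Cauchy--Schwarz inequality, after which the prefactor $h\e/((d\e)^{n-1}|D|)$ is identified with $(q\e r\e\eps^{n-1})^{-1}$ via \eqref{qere}. Your bookkeeping (including the observation that one may take $C=1$) is accurate.
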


\begin{proof}
It is enough to prove \eqref{est3} only for smooth functions.
Let $u$ be an arbitrary function from $C^1(\overline{T_i\e})$.
Let $x=(x',0)\in \checkD$, $y=(x',h\e)\in \hatD$.
One has
\begin{gather*}
u(y)-u(x)=\intl_{0}^{h\e}{\partial
u(\xi(\tau))\over\partial\tau}\d\tau,\text{ where }\xi(\tau)=x+{\tau\over {h\e}}(x-y).
\end{gather*}
We integrate this equality over $D_i\e:=d\e D+i\eps$ with respect to $x'$ , then divide by
$|D_i\e|$ and square. Using Cauchy inequality we obtain:
\begin{multline*}
\left|\langle u\rangle_{\hatD}-\langle u\rangle_{\checkD}\right|^2=
{1\over |D_i\e|^2}\left|\hspace{2pt}
\intl_{D_i\e}\intl_{0}^{h\e}{\partial
u(\xi(\tau))\over\partial\tau} \d\tau \d x'
\right|^2\leq
C{h\e\over (d\e)^{n-1}} \|\nabla u\|^2_{L_2(T_i\e)}={C_1\over q\e r\e \eps^{n-1}}\|\nabla u\|^2_{L_2(T_i\e)} 
\end{multline*}
and \eqref{est3} is proved.
\end{proof}

\begin{lemma}\label{lm4}
One has
\begin{gather}\label{est4}
\forall u\in H^1(\Omega\e):\quad \suml_{i\in\I}\|u\|^2_{L_2(T_i\e)}\leq C h\e\left(\|u\|^2_{H^1(\Omega)}+\suml_{i\in \I}\|\nabla u\|_{L_2(T_i\e)}^2\right).
\end{gather}
\end{lemma}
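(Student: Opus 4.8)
The plan is to reduce \eqref{est4} to two elementary facts: a one–dimensional estimate along the axis of each passage, combined with the classical trace inequality on the Lipschitz boundary $\partial\Omega$. The point is that each passage $T_i\e$ is the thin vertical cylinder $D_i\e\times(0,h\e)$, where $D_i\e=d\e D+i\eps$, and its bottom face $\checkD=D_i\e\times\{0\}$ lies on $\Gamma$, so that the value of $u$ on $T_i\e$ is controlled by its boundary value on $\checkD$ plus the vertical part of its gradient inside $T_i\e$.

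First, I would fix $i\in\I$ and, by density, take a smooth $u$. For $x'\in D_i\e$ and $x_n\in(0,h\e)$, writing $u(x',x_n)-u(x',0)=\intl_0^{x_n}\frac{\partial u}{\partial x_n}(x',t)\,\d t$ and using the Cauchy–Schwarz inequality gives
$$|u(x',x_n)|^2\leq 2|u(x',0)|^2+2h\e\intl_0^{h\e}\left|\frac{\partial u}{\partial x_n}(x',t)\right|^2\d t.$$
Integrating this first over $x_n\in(0,h\e)$ and then over $x'\in D_i\e$, and recalling that the surface measure on $\checkD$ equals $\d x'$ and that $|\partial u/\partial x_n|\leq|\nabla u|$, yields
$$\|u\|^2_{L_2(T_i\e)}\leq 2h\e\,\|u\|^2_{L_2(\checkD)}+2(h\e)^2\|\nabla u\|^2_{L_2(T_i\e)}.$$

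Next I would sum over $i\in\I$. Since the passages are pairwise disjoint (recall $\mathrm{dist}(T_i\e,T_j\e)\geq 2\eps\left({1\over 2}-R\right)$), their bottom faces $\checkD$, $i\in\I$, are pairwise disjoint subsets of $\Gamma$; and since $u\in H^1(\Omega\e)$, the trace of $u|_{T_i\e}$ on $\checkD$ coincides with the trace of $u|_\Omega$ on $\checkD\subset\Gamma$. Hence
$$\suml_{i\in\I}\|u\|^2_{L_2(\checkD)}\leq\|u\|^2_{L_2(\Gamma)}\leq\|u\|^2_{L_2(\partial\Omega)}\leq C\|u\|^2_{H^1(\Omega)},$$
the last step being the trace theorem for the Lipschitz domain $\Omega$, with $C$ depending only on $\Omega$. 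Finally, since $h\e\to 0$ by \eqref{ass15}, one has $(h\e)^2\leq h\e$ for small $\eps$, so $2(h\e)^2\suml_{i\in\I}\|\nabla u\|^2_{L_2(T_i\e)}\leq 2h\e\suml_{i\in\I}\|\nabla u\|^2_{L_2(T_i\e)}$. Collecting these bounds gives \eqref{est4}.

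I do not expect a real obstacle: this is the same ``thin–cylinder Poincar\'e/trace'' mechanism already used in Lemmas \ref{lm2}–\ref{lm3}. The two points that need care are (i) that the boundary term in the vertical estimate is genuinely the same object whether the trace is taken from inside $\Omega$ or from inside $T_i\e$ — this is exactly what licenses applying the $\Omega$–side trace theorem — and (ii) that the disjointness of the $\checkD$ lets the sum of boundary contributions be dominated by a single trace norm on $\Gamma$, with no covering or finite–overlap argument needed; the factor $(h\e)^2$ in front of the passage–gradient term is then harmlessly absorbed into $h\e$.
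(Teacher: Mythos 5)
Your proof is correct, but it follows a genuinely different route from the paper's. You run the fundamental theorem of calculus downward to the bottom face $\checkD\subset\Gamma$, obtaining $\|u\|^2_{L_2(T_i\e)}\leq 2h\e\|u\|^2_{L_2(\checkD)}+2(h\e)^2\|\nabla u\|^2_{L_2(T_i\e)}$, and then control $\suml_{i\in\I}\|u\|^2_{L_2(\checkD)}$ by $\|u\|^2_{L_2(\Gamma)}$ (pairwise disjointness of the bottom faces) together with the trace theorem for the Lipschitz domain $\Omega$. The paper instead avoids the trace theorem altogether: it integrates the starting point of the vertical segment over a collar $\widetilde T_i\e=D_i\e\times(-a,0)$ of fixed depth $a$ lying inside $\Omega$, which replaces your boundary term by $\tfrac{2h\e}{a}\|u\|^2_{L_2(\widetilde T_i\e)}$ plus gradient terms on $\widetilde T_i\e\cup T_i\e$; summing over $i$ and using $\cupl_{i\in\I}\widetilde T_i\e\subset\Omega$ then gives \eqref{est4} directly. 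Your version buys independence from the simplifying geometric assumption the paper makes (that such a collar $\Omega_a$ exists, which the authors admit requires ``some small modifications'' in general); the paper's version buys an argument that is purely one-dimensional integration plus Cauchy--Schwarz and does not import the trace constant of $\Omega$ --- though since the trace theorem is used elsewhere in the paper anyway, this is largely a matter of taste. Both arguments yield the stated bound after absorbing $(h\e)^2\leq h\e$ for small $\eps$, and your two points of care (coincidence of the traces from the two sides of $\checkD$, and disjointness of the bottom faces within $\Gamma$) are exactly the right ones and are justified by the paper's assumptions \eqref{ass11}--\eqref{ass14}.
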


\begin{proof}
It is enough to prove the lemma only for smooth functions. Let $u$ be an arbitrary function from $C^1(\overline{\Omega\e})$. For the sake of simplicity we suppose that there exists $a>0$ such that the set
$$\Omega_{a}:=\left\{x=(x',x_n)\in\mathbb{R}^n:\ (x',0)\in\Gamma,\ x_n\in (-a,0)\right\}$$
is a subset of $\Omega$.  For the general case the proof need some small modifications.

Let $x=(x',\textbf{x})$, $y=(x',\textbf{y})$, where $x'\in D_i\e:=d\e D+i\eps$, $\mathbf{x}\in (-a,0)$, $\mathbf{y}\in (0,h\e)$.
One has
\begin{gather*}
u(y)=u(x)+\intl_{0}^{\textbf{y}-\textbf{x}}{\partial
u(\xi(\tau))\over\partial\tau}\d\tau,\text{ where }\xi(\tau)=x+{\tau\over y-x}\left( \mathbf{y}-\mathbf{x}\right).
\end{gather*}
We square this equality, then integrate over $D_i\e$ with respect to $x'$, over $(-a,0)$ with respect to $\textbf{x}$ and over $(0,h\e)$ with respect to $\textbf{y}$. We arrive at
\begin{multline}\label{tr8}
a\|u\|^2_{L_2(T_i\e)}=\intl_0^{h\e}\intl_{-a}^0\intl_{D_i\e}
\left|u(x)+\intl_{0}^{\mathbf{y}-\mathbf{x}}{\partial
u(\xi(\tau))\over\partial\tau} \d\tau\right|^2 \d x' \d\mathbf{x}\d\mathbf{y}\\\leq
2h\e\|u\|^2_{L_2(\widetilde T_i\e)}+2h\e a (a+h\e)\|\nabla u\|_{L_2(\widetilde T_i\e\cup T_i\e)}^2,
\end{multline}
where $\widetilde T_i\e=\{x=(x',x_n):\ x'\in D_i\e,\ -a<x_n<0\}$.
Summing up \eqref{tr8} by $i\in\I$ and taking into account, that $\cupl_{i\in\I}\widetilde T_i\e\subset\Omega_a\subset\Omega$,
we obtain the required inequality \eqref{est4}. 
\end{proof}

\subsection{\label{subsec22}Proof of Theorem \ref{th1}: the case $q<\infty $}

\subsubsection{Proof of the property (A) of Hausdorff convergence} Let $\lambda\e\in\sigma(\mathcal{A}\e)$ and  $\lambda\e\to\lambda$ as $\eps\to 0$. We have to prove that either 
\begin{gather}\label{HausA1}
\lambda\in\sigma(\mathcal{A}_{qr})\text{ if }r>0\text{\quad or\quad  }\lambda\in\sigma(\mathcal{A}_q)\text{ if }r=0.
\end{gather}

Recall, that by $\{\lambda_k^\eps\}_{k=1}^\infty$ we denote the
sequence of eigenvalues of $\mathcal{A}\e$ written in the ascending order and repeated according to multiplicity. By $\{u_k\}_{k=1}^\infty$ we denote a corresponding sequence of eigenfunctions normalized by the condition $(u_k\e,u_l\e)_{\mathcal{H}\e}=\delta_{kl}$.

We denote by $k\e$ the index corresponding to $\lambda\e$ (i.e. $\lambda\e=\lambda_{k\e}\e$).
By $u\e=u\e_{k\e}\in H^1(\Omega)$ we denote the corresponding eigenfunction. One has
\begin{gather}\label{normalization}
\|u\e\|_{\mathcal{H}\e}=1,\quad \|\nabla u\e\|^2_{L_2(\Omega\e)}=\lambda\e.
\end{gather}

In order to describe the behaviour of $u\e$ on $\cupl_{i\in \I} B_i\e$ as $\eps\to 0$ we will use the operator $$\Pi\e:L_2(\cupl_{i\in\I} B_i\e)\to L_2(\Gamma)$$ defined as follows:
\begin{gather*}
\Pi\e u(x)=
\begin{cases}
\langle u \rangle_{B_i\e}\sqrt{r\e},& x\in \Gamma^{\eps}_i,\\ 0,& x\in\Gamma\backslash\cupl_{i\in\I}
\Gamma^{\eps}_i.
\end{cases}
\end{gather*}
(recall, that $r\e$ is defined by \eqref{qere}). 
Using the Cauchy inequality, \eqref{disttogamma2} and taking into account that 
$$|\Gamma_i\e|=\eps^{n-1},\quad |B_i\e|=(b\e)^n |B|$$
we obtain
\begin{gather}
\label{Pi_ineq} \|\Pi\e u\|^2_{L_2(\Gamma)}\leq r\e\suml_{i\in\mathcal{I}\e}{|\Gamma_i\e|\over|B_i\e|}
\intl_{B_i\e}|u(x)|^2\d x=\suml_{i\in\mathcal{I}\e}\intl_{B_i\e}\varrho\e|u(x)|^2\d x\leq \|u\|^2_{\mathcal{H}\e}.
\end{gather}

In view of \eqref{normalization}, \eqref{Pi_ineq}
$$\|u\e\|^2_{H^1(\Omega)}+\|\Pi\e u\e\|^2_{L_2(\Gamma)}\leq C,$$
whence, using the Rellich embedding theorem and the trace theorem, we conclude that there is a subsequence (still denoted by $\eps$) and $u_1\in H^1(\Omega)$, $u_2\in L_2(\Gamma)$ such that
\begin{gather}\label{conv1}
u\e\rightharpoonup u_1\text{ in }H^1(\Omega),\\\label{conv2}
u\e\rightarrow u_1\text{ in }L_2(\Omega),\\\label{conv3}
u\e\rightarrow u_1\text{ in }L_2(\Gamma),\\
\label{conv4}
\Pi\e u\e\rightharpoonup u_2\text{ in }L_2(\Gamma)
\end{gather}
as $\eps\to 0$ (here we use the same notation for the functions $u\e$, $u_1$ and their traces on $\Gamma$).

We start from the case 
\begin{gather*}
u_1\not=0. 
\end{gather*}
We will prove that $\lambda$ is the eigenvalue of the operator 
$\textcolor{black}{\mathcal{A}_{qr}}$ if $r>0$ (respectively, of the operator 
$\mathcal{A}_{q}$ if $r=0$)
and $U=(u_1,r^{-1/2}u_2)$ (respectively, $U=(u_1,u_2)$) is the corresponding eigenfunction. 

For an arbitrary $w\in H^1(\Omega\e)$  we have
\begin{gather}\label{int_eq}
\intl_{\Omega\e}\nabla u\e\cdot\nabla w \d x=\lambda\e\intl_{\Omega\e} u\e w  \rho\e \d x.
\end{gather}
The strategy of proof is to plug into \eqref{int_eq} some specially chosen test-function $w$ depending on $\eps$ and then pass to the limit as $\eps\to 0$ in order to obtain either the equality $\mathcal{A}_{qr} U=\lambda U$ ($r>0$) or the equality $\mathcal{A}_q U=\lambda U$ ($r=0$)  written in a weak form. 

We choose this test-function as follows:
\begin{gather}\label{we}
w(x)=w\e(x):=
\begin{cases}
w_1(x)+\ds\suml_{i\in \I}(w_1(x^{i,\eps})-w_1(x))\varphi_i\e(x),& x\in\Omega,\\
\ds{{1\over\sqrt{r\e}}w_2(x^{i,\eps})-w_1(x^{i,\eps})\over h\e}x_n+w_1(x^{i,\eps}),& x=(x',x_n)\in T_i\e,\\
\ds{1\over\sqrt{r\e}}w_2(x^{i,\eps}),&x\in B_i\e.
\end{cases}
\end{gather}
Here $w_1\in C^\infty(\Omega)$, $w_2\in C^\infty(\Gamma)$ are arbitrary functions, $\varphi_i\e(x)=\varphi\left({|x-x^{i,\eps}|\over\eps}\right)$, where $\varphi:\mathbb{R}\to\mathbb{R}$ is a smooth functions satisfying $\varphi(t)=1$ as $t\leq R$ and $\varphi(t)=0$ as $t\geq {1\over 2}$, the constant $R\in (0,{1\over 2})$ comes from \eqref{ass12}-\eqref{ass13}. It is clear that $w\e(x)$ is continuous and piecewise smooth function.

We plug $w=w\e(x)$  into \eqref{int_eq}. Firstly, we study the left-hand-side. 
Taking into account that $\supp(\varphi_i\e)\subset\overline{Y_i\e}$ and $w\e=\mathrm{const}$ in $B_i\e$
we obtain:
\begin{multline}\label{Iomega}
\intl_{\Omega\e}\nabla u\e \cdot\nabla w\e\d x=\intl_{\Omega}\nabla u\e\cdot \nabla w_1 \d x+\suml_{i\in\I}\intl_{Y_i\e}\nabla u\e\cdot\nabla \left(\left(w_1(x^{i,\eps})-w_1\right)\phi_i\e\right)\d x
\\+\suml_{i\in\I}\intl_{T_i\e}\nabla u\e\cdot\nabla w\e \d x.
\end{multline}

By virtue of \eqref{conv1} we get
\begin{gather}\label{Iomega1}
\intl_{\Omega}\nabla u\e\cdot \nabla w_1 \d x\to \intl_{\Omega}\nabla u_1\cdot\nabla w_1 \d x\text{ as }\eps\to 0.
\end{gather}

Using $\supp(\varphi_i\e)\subset\overline{Y_i\e}$ one can easily obtain the estimate
\begin{gather*}
\left|\nabla \left(\left(w_1(x^{i,\eps})-w_1\right)\phi_i\e\right)\right|\leq C,
\end{gather*}
whence, taking into account that \begin{gather}\label{sumeps}\suml_{i\in\mathcal{I}\e}\eps^{n-1}=\suml_{i\in\mathcal{I}\e}|\Gamma_i\e|\leq |\Gamma|,
\end{gather}
we obtain:
\begin{gather}\label{Iomega2}
\left|\suml_{i\in\I}\intl_{Y_i\e}\nabla u\e\cdot\nabla \left(\left(w_1(x^{i,\eps})-w_1\right)\phi_i\e\right)\d x\right|^2\leq C\|\nabla u\e\|^2_{L_2(\Omega)}\left|\cupl_{i\in\mathcal{I}\e} Y_i\e\right|\leq C_1\eps\suml_{i\in\mathcal{I}\e}\eps^{n-1}\underset{\eps\to 0}\to 0.
\end{gather}

Now we inspect the third integral in \eqref{Iomega}. Integrating by parts and taking into account that $\Delta w\e=0$ in $T_i\e$  we get:
\begin{multline}\label{Iomega3}
\suml_{i\in\I}\intl_{T\e}\nabla u\e\cdot\nabla w\e \d x=
-\intl_{\checkD}u\e {\partial w\e\over\partial x_n} \d s+\intl_{\hatD}u\e {\partial w\e\over\partial x_n} \d s\\=
\suml_{i\in\I}{(d\e)^{n-1}|D|\over h\e}\left(\langle u\e\rangle_{\checkD} - \langle u\e\rangle_{\hatD}\right)\left(w_1(x^{i,\eps})-{w_2(x^{i,\eps})\over\sqrt{r\e}}\right)\\=
q\e r\e\suml_{i\in\I}{\eps^{n-1}}\left(\langle u\e\rangle_{\Gamma_i^{\eps}} - \langle u\e\rangle_{B_i^{\eps}}\right)\left(w_1(x^{i,\eps})-{w_2(x^{i,\eps})\over\sqrt{r\e}}\right)+\delta(\eps),
\end{multline}
where $q\e$ and $r\e$ are defined by \eqref{qere} and the remainder $\delta(\eps)$ vanishes as $\eps\to 0$, namely, using the Cauchy inequality, condition \eqref{ass2}, estimates \eqref{est1}, \eqref{est2} and \eqref{sumeps}, we obtain:
\begin{multline}\label{delta_est}
|\delta(\eps)|^2\leq C(q\e r\e)^2\suml_{i\in\I}\eps^{n-1}\left( \left|\langle u\e\rangle_{\checkD} - \langle u\e\rangle_{\Gamma_i^{\eps}}\right|^2+ \left|\langle u\e\rangle_{\hatD} - \langle u\e\rangle_{B_i^{\eps}}\right|^2\right)\cdot \suml_{i\in\I}\eps^{n-1}{1\over r\e}\\\leq C_1(q\e)^2r\e\mathbf{D}\e\eps^{n-1}\|\nabla u\e\|_{L_2(\cupl_{i\in \I}(Y_i\e\cup B_i\e))}^2\leq 
C_2\mathbf{D}\e\eps^{n-1}\to 0\text{ as }\eps\to 0.
\end{multline}

We introduce the operator $Q\e: C^1(\Gamma)\to L_2(\Gamma)$ defined by the formula
\begin{gather*}
Q\e w=\begin{cases}
w(x^{i,\eps}),& x\in \Gamma^{\eps}_i,\\ 0,& x\in\Gamma\backslash\cupl_{i\in\I}
\Gamma^{\eps}_i.
\end{cases}
\end{gather*}
It is straightforward to show that
\begin{gather}\label{Q}
\forall w\in C^1(\Gamma),\ Q\e w\underset{\eps\to 0}\to w\text{ in }L_2(\Gamma).
\end{gather}
Then, taking into account the definitions of the operators $\Pi\e$, $Q\e$ and 
using \eqref{qere+}, \eqref{conv3}, \eqref{conv4}, \eqref{delta_est}, \eqref{Q}, we obtain from \eqref{Iomega3}:
\begin{multline}\label{Iomega4}
\suml_{i\in\I}\intl_{T\e}\nabla u\e\cdot\nabla w\e \d x=q\e r\e\intl_{\Gamma}\left(u\e-{1\over \sqrt{r\e}}\Pi\e u\e\right)\left(Q\e w_1-{1\over\sqrt{r\e}}Q\e w_2\right)\d s+\delta(\eps)\\\to \intl_{\Gamma}\left(qr u_1 w_1-q\sqrt{r}u_2 w_1-q\sqrt{r}u_1 w_2+q u_2 w_2\right)\d s\text{ as }\eps\to 0.
\end{multline}

Combining  \eqref{Iomega}-\eqref{Iomega2}, \eqref{Iomega4} we arrive at
\begin{gather}\label{LHS}
\intl_{\Omega}\nabla u\e\cdot\nabla w\e \d x\underset{\eps\to 0}\to \intl_{\Omega} \nabla u_1\cdot\nabla w_1 \d x+\intl_{\Gamma}\left(qr u_1 w_1-q\sqrt{r}u_2 w_1-q\sqrt{r}u_1 w_2+q u_2 w_2\right)\d s.
\end{gather}

Now, we study the right-hand-side of \eqref{int_eq}. One has:
\begin{multline}\label{Jomega}
\lambda\e\intl_{\Omega\e} u\e w\e \rho\e \d x=\lambda\e\left(\intl_{\Omega}u\e w_1 \d x+\suml_{i\in\I}\intl_{Y_i\e} u\e \left(w_1(x^{i,\eps})-w_1\right)\phi_i\e \d x\right.\\\left.+\suml_{i\in\I}\intl_{T_i\e}u\e w\e \d x+{\varrho\e\over\sqrt{r\e}}\suml_{i\in\I}\intl_{B_i\e} u\e w_2(x^{i,\eps})\d x\right).
\end{multline}

It is clear that 
\begin{gather}\label{Jomega1}
\intl_{\Omega}u\e w_1  \d x\to \intl_{\Omega}u_1 w_1 \d x\text{ as }\eps\to 0
\end{gather}
and the next two integrals in \eqref{Jomega} vanishes as $\eps\to 0$:
\begin{gather}\label{Jomega2}
\left|\suml_{i\in\I}\intl_{Y_i\e} u\e \left(w_1(x^{i,\eps})-w_1\right)\phi_i\e \d x\right|^2\leq 
\suml_{i\in\I}\|w_1(x^{i,\eps})-w_1\|^2_{L_2(Y_i\e)}\suml_{i\in \I}\|u\e\|^2_{L_2(Y_i\e)}\leq 
C\suml_{i\in\I}\eps^{n+2}\to 0,\\\label{Jomega2+}
\left|\suml_{i\in\I}\intl_{T_i\e}u\e w\e \d x\right|^2\leq 
\suml_{i\in \I}\|u\e\|^2_{L_2(T_i\e)} \suml_{i\in\I}\|w\e\|^2_{L_2(T_i\e)}\leq
C\suml_{i\in\I}{(d\e)^{n-1} h\e\over{{r\e}}}\leq C_1 q\e (h\e)^2\suml_{i\in \I}\eps^{n-1} \to 0.
\end{gather}

Finally, we inspect the behaviour of the last integral in \eqref{Jomega}. One has:
\begin{multline}\label{Jomega3}
\suml_{i\in\I}\intl_{B_i\e}{\varrho\e\over\sqrt{r\e}} u\e w_2(x^{i,\eps})\d x=\suml_{i\in\I}{\varrho\e (b\e)^n|B|\over\sqrt{r\e}}\langle u\e\rangle_{B_i\e}w_2(x^{i,\eps})=\intl_\Gamma \Pi\e u\e\hspace{1pt} Q\e w_2 \d s\underset{\eps\to 0}\to \intl_{\Gamma}u_2 w_2 \d s.
\end{multline}

It follows from \eqref{Jomega}-\eqref{Jomega3} and $\liml_{\eps\to 0}\lambda\e=\lambda$ that
\begin{gather}\label{RHS}
\liml_{\eps\to 0}\left(\lambda\e\intl_{\Omega\e} u\e w\e \rho\e \d x\right)=\lambda\left(\intl_{\Omega}u_1 w_1 \d x+\intl_{\Gamma}u_2 w_2 \d s\right).
\end{gather}

Finally, combining \eqref{int_eq}, \eqref{LHS} and \eqref{RHS}, we get
\begin{gather}\label{int_eq_final1}
\intl_{\Omega}\nabla u_1\cdot \nabla w_1 \d x+\intl_{\Gamma}\left(qr u_1 w_1-q\sqrt{r}u_2 w_1-q\sqrt{r}u_1 w_2+q u_2 w_2\right)\d s=\lambda\left(\intl_{\Omega}u_1 w_1 \d x+\intl_{\Gamma}u_2 w_2 \d s\right).
\end{gather}
By the density arguments equality \eqref{int_eq_final1} is valid for an arbitrary $(w_1,w_2)\in H^1(\Omega)\oplus L_2(\Gamma)$.

If $r>0$ then \eqref{int_eq_final1} is equivalent to equality
\begin{gather*}
\eta_{qr}[U,W]=\lambda(U,W)_{\mathcal{H}},\text{ where }U=(u_1,r^{-1/2}u_2),\ W=(w_1,r^{-1/2}w_2),
\end{gather*}
whence, evidently, 
\begin{gather*}
U\in\mathrm{dom}(\mathcal{A}_{qr}),\ \mathcal{A}_{qr}U=\lambda U,
\end{gather*}
and therefore, since $u_1\not=0$, $\lambda$ is the eigenvalue of the operator $\mathcal{A}_{qr}$. 
If $r=0$ then \eqref{int_eq_final1} implies
\begin{gather*}
U=(u_1,u_2)\in\mathrm{dom}(\mathcal{A}_{q}),\ \mathcal{A}_{q}U=\lambda U, 
\end{gather*}
i.e. $\lambda$ is the eigenvalue of the operator $\mathcal{A}_{q}$.
\medskip

Now, we inspect the case $$u_1=0.$$
We will prove that in this instance 
$\lambda=q$. 
Recall (see \eqref{q1}, Lemma \ref{lm1} and \eqref{q2}) that the point $q$ belongs to the essential spectrum of both  operators $\mathcal{A}_{qr}$ and $\mathcal{A}_{q}$.

We express the eigenfunction $u\e$ in the form
\begin{gather}\label{repres}
u\e=v\e-g\e+w\e,
\end{gather}
where 
\begin{gather*}
v\e(x)=
\begin{cases}
0,&x\in \Omega,\\
{\<u\e\>_{B_i\e}\over h\e}x_n,&x=(x',x_n)\in T_i\e,\\
\<u\e\>_{B_i\e},&x\in B_i\e
\end{cases}
\end{gather*}
and 
$$g\e=\suml_{i=1}^{k\e-1}(v\e,u_k\e)_{\mathcal{H}\e} u_k\e.$$
(recall, that  $\lambda\e=\lambda\e_{k\e}$, $u\e=u\e_{k\e}$). It is clear that $v\e\in H^1(\Omega\e)$, $g\e\in\mathrm{dom}(\mathcal{A}\e)$ and 
\begin{gather}\label{g_prop1}
v\e-g\e\in \left(\mathrm{span}\left\{u_1\e,\dots,u_{k\e-1}\e\right\}\right)^\perp. 
\end{gather}

One has the following Poincar\'{e}-type inequality:
\begin{gather}\label{prelim1}
\suml_{i\in\I}\intl_{B_i\e}\varrho\e\left|u\e-\<u\e\>_{B_i\e}\right|^2\d x \leq C\varrho\e (b\e)^2\suml_{i\in\I}\|\nabla u\e\|_{L_2(B_i\e)}^2\leq C_1 \varrho\e (b\e)^2.
\end{gather}
Since $b\e\geq d\e$ and $r\e\leq C$ then
\begin{gather*}
\varrho\e (b\e)^2 = C r\e {\eps^{n-1}\over (b\e)^{n-2}} \leq C r\e {\eps^{n-1}\over (d\e)^{n-2}}=C_1\cdot\begin{cases} \mathbf{D}\e\eps^{n-1},&n\geq 2,\\\eps,&n=2,\end{cases}
\end{gather*}
and therefore in view of \eqref{ass2} 
\begin{gather}\label{poincare}
\varrho\e  (b\e)^2 \to 0\text{ as }\eps\to 0.
\end{gather}
It follows from \eqref{prelim1}, \eqref{poincare} that
\begin{gather}\label{uB}
\suml_{i\in\I}\intl_{B_i\e}\varrho\e\left|u\e-\<u\e\>_{B_i\e}\right|^2\d x \to 0\text{ as }\eps\to 0.
\end{gather}

Using estimate \eqref{est4} we get
\begin{gather}\label{uG}
\|u\e\|_{L_2(\cupl_{i\in\I} T_i\e)}\to 0\text{ as }\eps\to 0.
\end{gather}

Taking into account $\|u\e\|_{\mathcal{H}\e}=1$ one can easily show that
\begin{gather*}
\suml_{i\in\I}r\e\eps^{n-1} \left|\<u\e\>_{B_i\e}\right|^2=1-\|u\e\|^2_{L_2(\Omega)}-\suml_{i\in\I}\|u\e\|_{L_2(T_i\e)}^2- \suml_{i\in\I}\intl_{B_i\e}\varrho\e\left
|u\e-\<u\e\>_{B_i\e}\right|^2\d x,
\end{gather*}
whence, in view of \eqref{uB}, \eqref{uG} and the fact that $u_1=0$, we obtain
\begin{gather}\label{1}
\suml_{i\in\I}r\e\eps^{n-1} \left|\<u\e\>_{B_i\e}\right|^2=1+o(1)\text{ as }\eps\to 0.
\end{gather}

Using \eqref{1} one has the following asymptotics for the function $v\e$:
\begin{gather}\label{v1}
\|\nabla v\e\|^2_{L_2(\Omega\e)}=
q\e\suml_{i\in\I}r\e\eps^{n-1}\left|\<u\e\>_{B_i\e}\right|^2=q + o(1)\text{ as }\eps\to 0,\\\label{v2}
\suml_{i\in\I}\intl_{B_i\e}\varrho\e |v\e|^2 \d x= \suml_{i\in\I}r\e\eps^{n-1}\left| \<u\e\>_{B_i\e}\right|^2 =1+o(1)\text{ as }\eps\to 0,\\\label{v3}
\suml_{i\in\I}\|v\e\|^2_{L_2(T_i\e)}={1\over 3}q\e (h\e)^2\suml_{i\in\I}r\e\eps^{n-1}\left| \<u\e\>_{B_i\e}\right|^2
=o(1)\text{ as }\eps\to 0.
\end{gather}
It follows from \eqref{v2}-\eqref{v3} and $v\e=0$ in $\Omega$ that
\begin{gather}\label{v4}
\|v\e\|_{\mathcal{H}\e}=1+o(1)\text{ as }\eps\to 0.
\end{gather}

By virtue of \eqref{uB}, \eqref{uG}, \eqref{v3} and the fact that $\|u\e\|^2_{L_2(\Omega)}\underset{\eps\to 0}\to \|u_1\|^2_{L_2(\Omega)}= 0$ one gets:
\begin{gather}\label{u-v}
\|u\e-v\e\|^2_{\mathcal{H}\e}\leq \suml_{i\in\I}\intl_{B_i\e}\varrho\e\left|u\e-\<u\e\>_{B_i\e}\right|^2 \d x+2\|u\e\|^2_{L_2(\cupl_{i\in\I} T_i\e)}+2\|v\e\|^2_{L_2(\cupl_{i\in\I} T_i\e)}+\|u\e\|^2_{L_2(\Omega)}\underset{\eps\to 0}\to 0.
\end{gather}

Using the equality $(u\e,u_k\e)_{\mathcal{H}\e}=0$ for $k=1,\dots, k\e-1$ and the Bessel inequality we obtain:
\begin{gather*}
\|g\e\|^2_{\mathcal{H}\e}=\suml_{k=1}^{k\e-1}\left|(v\e,u_k\e)_{\mathcal{H}\e}\right|^2=
\suml_{k=1}^{k\e-1}\left|(v\e-u\e,u_k\e)_{\mathcal{H}\e}\right|^2\leq \|v\e-u\e\|^2_{\mathcal{H}\e},\\
\|\nabla g\e\|^2_{L_2(\Omega\e)}=\suml_{k=1}^{k\e-1}\lambda_k\e\left|(v\e, u_k\e)_{\mathcal{H}\e}\right|^2=
\suml_{k=1}^{k\e-1}\lambda_k\e\left|(v\e-u\e,u_k\e)_{\mathcal{H}\e}\right|^2\leq \lambda\e\|v\e-u\e\|^2_{\mathcal{H}\e}
\end{gather*}
and thus in view of \eqref{u-v} 
\begin{gather}\label{g_est}
\|g\e\|^2_{\mathcal{H}\e}+\|\nabla g\e\|^2_{L_2(\Omega\e)}\to 0\text{ as }\eps\to 0.
\end{gather}

Now let us estimate the remainder $w\e$. 
It is well-known (cf. \citep{Reed}) that
\begin{gather}\label{minmax_p}
\lambda\e=\inf\left\{{\|\nabla u\|^2_{L_2(\Omega\e)}\over \| u\|^2_{\mathcal{H}\e}},\ 0\not=u\in \left(\mathrm{span}\left\{u_1\e,\dots,u_{k\e-1}\e\right\}\right)^\perp \right\}.
\end{gather}

We denote $\tilde v\e=v\e-g\e$. Taking into account \eqref{normalization} and \eqref{g_prop1} we obtain from \eqref{minmax_p}:
\begin{gather*}
\|\nabla u\e\|^2_{L_2(\Omega\e)}\leq {\|\nabla \tilde v\e\|^2_{L_2(\Omega\e)}\over \|\tilde v\e\|^2_{\mathcal{H}\e}}
\end{gather*}
or, using $u\e=\tilde v\e+w\e$,
\begin{gather}\label{minmax}
\|\nabla w\e\|^2_{L_2(\Omega\e)}\leq -2(\nabla \tilde v\e,\nabla w\e)_{L_2(\Omega\e)}+{\|\nabla \tilde v\e\|^2_{L_2(\Omega\e)}}\left(\|\tilde v\e\|^{-2}_{\mathcal{H}\e}-1\right).
\end{gather}

In view of \eqref{v1}, \eqref{v4}, \eqref{g_est}
\begin{gather}\label{minmax1}
{\|\nabla \tilde v\e\|^2_{L_2(\Omega\e)}}\left(\|\tilde v\e\|^{-2}_{\mathcal{H}\e}-1\right)\to 0\text{ as }\eps\to 0.
\end{gather}

Let us estimate the first term in the right-hand-side of \eqref{minmax}. One has
\begin{gather}\label{minmax2}
(\nabla \tilde v\e,\nabla w\e)_{L_2(\Omega\e)}=(\nabla v\e,\nabla u\e-\nabla v\e)_{L_2(\Omega\e)}+(\nabla v\e,\nabla g\e)_{L_2(\Omega\e)}-(\nabla g\e,\nabla w\e)_{L_2(\Omega\e)}.
\end{gather}
Integrating by parts we obtain:
\begin{multline}\label{delta1}
(\nabla v\e,\nabla u\e-\nabla v\e)_{L_2(\Omega\e)}=\suml_{i\in \I}\intl_{T_i\e} \nabla v\e\cdot\nabla (u\e-v\e)\d x=
\suml_{i\in \I} \left(\intl_{\checkD}-{\partial v\e\over\partial x_n}u\e \d s+\intl_{\hatD}{\partial v\e\over\partial x_n}(u\e-\langle u\e \rangle_{B_i\e}) \d s\right)\\
={(d\e)^{n-1}|D| \over h\e}\suml_{i\in \I} \langle u\e \rangle_{B_i^{\eps}} \left(-\langle u\e \rangle_{\checkD}+\langle u\e \rangle_{\hatD}-\langle u\e \rangle_{B_i^{\eps}}\right)\\= q\e r\e \suml_{i\in \I} \langle u\e \rangle_{B_i^{\eps}} \left(-\langle u\e \rangle_{\checkD}+\langle u\e \rangle_{\hatD}-\langle u\e \rangle_{B_i^{\eps}}\right)\eps^{n-1}.
\end{multline}
Then, using the Cauchy inequality, condition \eqref{ass2}, estimates \eqref{est1}, \eqref{est2}, \eqref{1} and the fact that $u_1=0$ (and hence $\|u\e\|_{L_2(\Gamma)}\to 0$ as $\eps\to 0$), we obtain  from \eqref{delta1}:
\begin{multline}\label{delta2}
\left|(\nabla v\e,\nabla u\e-\nabla v\e)_{L_2(\Omega\e)}\right|^2\leq r\e(q\e)^2\left\{\suml_{i\in \I}\left|\langle u\e \rangle_{B_i^{\eps}}\right|^2 r\e\eps^{n-1}\right\}
\left\{\suml_{i\in\I}\left|\langle u\e \rangle_{\checkD}+\langle u\e \rangle_{\hatD}-\langle u\e \rangle_{B_i^{\eps}}\right|^2\eps^{n-1}\right\}\\\leq 
C_1 
\suml_{i\in\I}\left(\left|\langle u\e \rangle_{\Gamma_i^{\eps}}\right|^2\eps^{n-1}+\left|\langle u\e \rangle_{\checkD}-\langle u\e\rangle_{\Gamma_i^{\eps}}\right|^2\eps^{n-1}+\left|\langle  u\e \rangle_{\hatD}-\langle u\e \rangle_{B_i^{\eps}}\right|^2\eps^{n-1}\right)\\\leq C_2\left(\|u\e\|_{L_2(\Gamma)}^2+\eps^{n-1}\mathbf{D}\e\|\nabla u\e\|_{L_2(\cupl_{i\in\I}Y_i\e)}^2+\eps^{n-1}\mathbf{D}\e\|\nabla u\e\|_{L_2(\cupl_{i\in\I} B_i\e)}^2\right)\to 0\text{ as }\eps\to 0.
\end{multline}
Further, in view of \eqref{v1}, \eqref{g_est},
\begin{gather}\label{minmax4}
\liml_{\eps\to 0}(\nabla v\e,\nabla g\e)_{L_2(\Omega\e)}=0.
\end{gather}
And finally, using \eqref{normalization}, \eqref{v1} and \eqref{g_est}, we obtain:
\begin{gather}\label{minmax5}
\left|(\nabla g\e,\nabla w\e)_{L_2(\Omega\e)}\right|\leq \left|(\nabla g\e,\nabla u\e)_{L_2(\Omega\e)}\right|+\left|(\nabla g\e,\nabla v\e)_{L_2(\Omega\e)}\right|+\|\nabla g\e\|^2_{L_2(\Omega\e)}\to 0\text{ as }\eps\to 0.
\end{gather}
It follows from \eqref{minmax2}, \eqref{delta2}-\eqref{minmax5} that
\begin{gather}\label{delta3}
\liml_{\eps\to 0}(\nabla \tilde v\e,\nabla w\e)_{L_2(\Omega\e)}=0.
\end{gather}
Combining \eqref{minmax}, \eqref{minmax1} and \eqref{delta3} we conclude that
\begin{gather}\label{delta4}
\liml_{\eps\to 0}\|\nabla w\e\|^2_{L_2(\Omega\e)}=0
\end{gather}
and thus, in view of \eqref{repres}, \eqref{v1},  \eqref{g_est}, 
\eqref{delta4}, 
\textcolor{black}{we arrive at the required asymptotic:}
\begin{gather*}
\lambda\e = \|\nabla u\e\|^2_{L_2(\Omega\e)}\sim  \|\nabla v\e\|^2_{L_2(\Omega\e)}\sim q\text{ as }\eps\to 0.
\end{gather*}
\smallskip

\subsubsection{Proof of the property (B) of Hausdorff convergence} Let $\lambda\in \sigma(\mathcal{A}_{qr})$ if $r>0$ (respectively,  $\lambda\in \sigma({\mathcal{A}_q})$ if $r=0$). We have to prove that
\begin{gather}\label{Hb1}
\exists\lambda\e\in\sigma(\mathcal{A}\e):\ \lambda\e\to\lambda\text{ as }\eps\to 0. 
\end{gather}

Proving this indirectly we assume the opposite: \textcolor{black}{there are a
subsequence $\eps_k$, $\eps_k\underset{\eps\to 0}{\searrow}0$  and
a positive number $\delta$} such that
\begin{gather}
\label{dist} (\lambda-\delta,\lambda+\delta)\cap\sigma(\mathcal{A}^{\eps})=\varnothing\text{ as }\eps=\eps_k.
\end{gather}

Since $\lambda\in\sigma(\mathcal{A}_{qr})$ (respectively, $\lambda\in\sigma(\mathcal{A}_{q})$) there exists
$F=(f_1,f_2) \in L_2(\Omega)\oplus L_2(\Gamma)$, such that
\begin{gather}\label{notinim}
F\notin
\mathrm{im}(\mathcal{A}_{qr} -\lambda\mathrm{I})\
\text{(respectively, }F\notin\mathrm{im}(\mathcal{A}_q -\lambda\mathrm{I})\text{)}.
\end{gather}
We introduce the function $f\e\in\mathcal{H}\e$ by the formula
\begin{gather*}
f\e(x)=\begin{cases} f_1(x),&x\in\Omega,\\
0,& x\in\cupl_{i\in\I}T_i\e,\\
{1\over\sqrt{r\e}}{\langle \mathbf{f}_2 \rangle_{\Gamma_i\e}},&x\in
B_i\e,
\end{cases}
\end{gather*}
where $\mathbf{f}_2(x)=\sqrt{r} f_2(x)$ if $r>0$ (respectively, $\mathbf{f}_2(x)=f_2(x)$ if $r=0$).

One has:
\begin{gather*}
\|f\e\|^2_{\mathcal{H}\e}=\|f_1\|^2_{L_2(\Omega)}+{1\over r\e}\suml_{i\in\I}\varrho\e|B_i\e|\left|\langle \mathbf{f}_2 \rangle_{\Gamma_i\e}\right|^2\leq \|f_1\|^2_{L_2(\Omega)}+\|\mathbf{f}_2\|^2_{L_2(\Gamma)}\leq C\|F\|^2_{L_2(\Omega)\oplus L_2(\Gamma)}.
\end{gather*}

In view of \eqref{dist} $\lambda$ belongs to the resolvent set of $\mathcal{A}\e$ as $\eps=\eps_k$ and therefore  there exists the unique $u\e\in\mathrm{dom}(\mathcal{A}\e)$ such that
$$\mathcal{A}\e u\e-\lambda u\e=f\e,\ \eps=\eps_k$$
and moreover the following estimates are valid as $\eps=\eps_k$:
\begin{gather}\label{uL2+}
\|u\e\|_{\mathcal{H}\e}\leq \delta^{-1}\|f\e\|_{\mathcal{H}\e}\leq C_1,\\\label{uH1+}
\|\nabla u\e\|_{L_2(\Omega\e)}^2= \lambda\|u\e\|^2_{\mathcal{H}\e}+(f\e,u\e)_{\mathcal{H}\e}\leq C_2.
\end{gather}
It follows from \eqref{uL2+}-\eqref{uH1+} that there exist a subsequence $\eps_{k_l}\subset\eps_k$ and $u_1\in H^1(\Omega)$, $u_2\in L_2(\Gamma)$ such that
\eqref{conv1}-\eqref{conv4} hold (as $\eps=\eps_{k_l}\to 0$).

One has for an arbitrary $w\in H^1(\Omega\e)$:
\begin{gather}\label{int_eq_f}
\intl_{\Omega\e}\nabla u\e \cdot \nabla w  \d x -\lambda\intl_{\Omega\e}  u\e w \rho\e \d x=\intl_{\Omega\e}f\e w  \rho\e \d x,\ \eps=\eps_{k_l}.
\end{gather}
We plug into \eqref{int_eq_f} the function $w=w\e(x)$ defined by formula \eqref{we} and pass to the limit as $\eps=\eps_{k_l}\to 0$. In the same way as above we obtain that $(u_1,u_2)$ satisfies the equality
\begin{multline}\label{int_eq_f_final}
\intl_{\Omega}\nabla u_1\cdot \nabla w_1 \d x+\intl_{\Gamma}\left(qr u_1 w_1-q\sqrt{r}u_2 w_1-q\sqrt{r}u_1 w_2+q u_2 w_2\right)\d s-\lambda\left(\intl_{\Omega}u_1 w_1 \d x+\intl_{\Gamma}u_2 w_2 \d s\right)\\=\intl_{\Omega}f_1  w_1 \d x+\intl_{\Gamma}\mathbf{f}_2   w_2 \d s,
\end{multline}
which holds for an arbitrary $(w_1,w_2)\in C^\infty(\Omega)\oplus C^\infty(\Gamma)$ (and by the density arguments for an arbitrary $(w_1,w_2)\in H^1(\Omega)\oplus L_2(\Gamma)$).
It follows easily from \eqref{int_eq_f_final} that 
\begin{gather*}
\begin{array}{llll}
\text{if }r>0\text{ then }&U=(u_1,r^{-1/2}u_2)\in\mathrm{dom}(\mathcal{A}_{qr})&\text{ and }&\mathcal{A}_{qr} U-\lambda U=F,\\
\text{if }r=0\text{ then }& U=(u_1,u_2)\in\mathrm{dom}(\mathcal{A}_q)&\text{ and }&\mathcal{A}_q  U-\lambda U=F.
\end{array}
\end{gather*}
We obtain a contradiction to \eqref{notinim}. Thus there is 
$\lambda\e\in\sigma(\mathcal{A}\e)$ such that $\liml_{\eps\to 
0}\lambda\e=\lambda$.

\subsubsection{\label{sss223} Proof of Lemma \ref{lm1}}

In the proof of Theorem \ref{th1} we use the fact that 
\begin{gather}\label{qinsigma}
q\in\sigma(\mathcal{A}_{qr}). 
\end{gather}
In this section we prove Lemma \ref{lm1} containing, in particular, the property \eqref{qinsigma}.

At first we study the point spectrum of the operator $\mathcal{A}_{qr}$. Let $\lambda\not=q$ be an eigenvalue of $\mathcal{A}_{qr}$ corresponding to the eigenfunction $U=(u_1,u_2)\not= 0$. It means that $\forall V=(v_1,v_2)\in H^1(\Omega)\oplus L_2(\Gamma)$
\begin{gather}
\label{disc1}
\intl_{\Omega}\nabla u_1\cdot\nabla \overline{v_1} \d x+qr\intl_{\Gamma}(u_1-u_2)(\overline{v_1-v_2})\d s=\lambda\left(\intl_{\Omega}u_1 \overline{v_1}\d x+\intl_{\Gamma}u_2 \overline{v_2} r \d s\right).
\end{gather}
One has $u_1\not= 0$  (otherwise, plugging $u_1=0$ into \eqref{disc1} we arrive at $u_2=0$, that contradicts to $U\not= 0$).
Moreover, it is straightforward to show that if $U=(u_1,u_2)$ satisfies \eqref{disc1} then 
$u_2={qu_1|_\Gamma\over q-\lambda}$ and $u_1$  satisfies
\begin{gather}
\label{disc2}
\intl_{\Omega}\nabla u_1\cdot\nabla \overline{v_1} \d x-{\lambda qr\over q-\lambda}\intl_{\Gamma}u_1 \overline{v_1} \d s=\lambda\intl_{\Omega}u_1 \overline{v_1} \d x,\quad \forall v_1\in H^1(\Omega).
\end{gather}
Conversely if $u_1\in H^1(\Omega)$ satisfies \eqref{disc2} then $U=(u_1,u_2)$, where $u_2={qu_1|_\Gamma\over q-\lambda}$, satisfies \eqref{disc1}.

Let $\mu\in\mathbb{R}$. By $\eta^\mu$ we denote the sesquilinear form in $L_2(\Omega)$ defined as follows:
$$\eta^\mu[u,v]=\intl_{\Omega}\nabla u\cdot\nabla \bar v \d x-\mu \intl_\Gamma u\bar v \d s,\quad \mathrm{dom}(\eta^\mu)=H^1(\Omega).$$
We denote by $\mathcal{A}^\mu$ the operator generated by this form.
Formally the eigenvalue problem $\mathcal{A}^\mu u=\lambda u$ can be written as
\begin{gather}\label{robin}
-\Delta u=\lambda u\text{ in }\Omega,\quad {\partial u\over\partial  n}=\mu u\text{ on }\Gamma,\quad {\partial u\over\partial  n}=0\text{ on }\partial\Omega\setminus\Gamma,
\end{gather}
i.e.  $\mathcal{A}^\mu$ is the Laplacian in $\Omega$ subject to the Robin boundary conditions on $\Gamma$ and the Neumann ones on $\partial\Omega\setminus\Gamma$.

The spectrum of $\mathcal{A}^\mu$ is purely discrete. We denote by 
$\{\lambda_k(\mu)\}_{k\in\mathbb{N}}$ the
sequence of eigenvalues of $\mathcal{A}^\mu$ written in the ascending order and repeated according to their multiplicity. 

We denote by $\sigma_{\mathrm{p}}(\mathcal{A}_{qr})$  the set of eigenvalues of $\mathcal{A}_{qr}$.
It follows from the arguments above that
\begin{gather}\label{lambdamu} 
\sigma_{\mathrm{p}}(\mathcal{A}_{qr})\setminus\{q\}=\left\{\lambda\in\mathbb{R}: 
\ \exists\ k\in\mathbb{N}\text{ such that }
\lambda=\lambda_k(\mu),\text{ where }\mu={\lambda qr\over q-\lambda}
\right\}.
\end{gather}

Using the minimax principle it not hard to prove the following well-known 
properties of 
the eigenvalues of $\mathcal{A}^\mu$:

\begin{itemize}
\item for each $k\in\mathbb{N}$ the function $\mu\mapsto \lambda_k(\mu)$ is continuous and monotonically decreasing,

\item for each $k\in\mathbb{N}$ $\lambda_k(\mu)\to \lambda_k^D$ as $\mu\to -\infty$, where 
$\lambda_k^D$ is the $k$-th eigenvalue of the operator $\mathcal{A}^D$ acting in $L_2(\Omega)$ and generated by the form
$$\eta^D[u,v]=\intl_{\Omega}\nabla u\cdot\nabla \bar v \d x,\quad\mathrm{dom}(\eta^D)=\{u\in H^1(\Omega):\ u=0\text{ on }\Gamma\}$$
(i.e. $\mathcal{A}^D$ is the Laplacian in $\Omega$ subject to the Dirichlet boundary conditions on $\Gamma$ and the Neumann ones on $\partial\Omega\setminus\Gamma$),

\item for each $k\in\mathbb{N}$ $\lambda_k(\mu)\to -\infty$ as $\mu\to\infty$.\footnote{For the fulfilment of this property it is essential that $n\geq 2$. In the case $n=1$ this property is violated. Namely, let us consider the problem\quad $-u''=\lambda u$ on $(0,T)$, $u'(T)=\mu u(T)$, $u'(0)=0$. Its first eigenvalue $\lambda_1(\mu)$ goes to $-\infty$ as $\mu\to \infty$, while for $k\geq 2$ $\lambda_k(\mu)$ goes to the $(k-1)$-th eigenvalue of the problem $-u''=\lambda u$ on $(0,T)$, $u(T)=0$, $u'(0)=0$.}

\end{itemize}

We denote by $\Upsilon$ the curve $$\Upsilon=\left\{(\lambda,\mu)\in\mathbb{R}^2: \mu={\lambda qr\over q-\lambda}\right\}.$$ It consists of two branches $\Upsilon_{\pm}=\left\{(\lambda,\mu)\in\Upsilon: \pm(q-\lambda)> 0\right\}$.  
We also introduce the curves $\Upsilon_k=\{(\lambda,\mu)\in\mathbb{R}^2:\ \lambda=\lambda_k(\mu)\}$, $k\in\mathbb{N}$.

From the properties above we deduce the following:  

\begin{itemize}
\item For each $k\in\mathbb{N}$ the curve $\Upsilon_k$ intersects
the branch $\Upsilon_{+}$ exactly in one point (we denote the corresponding value of $\lambda$ by $\lambda_k^+$) . 

\item  We denote by $k_0$ the smallest integer satisfying $\lambda_{k_0}^D\leq q$ and $\lambda_{k_0+1}^D> q$. Then for each $k\in\mathbb{N}$ the curve $\Upsilon_{k+k_0}$ intersects 
the branch $\Upsilon_{-}$ exactly in one point (we denote the corresponding value of $\lambda$ by $\lambda_k^-$). For $k\leq k_0$ the curve $\Upsilon_k$ has no intersections with $\Upsilon_-$.

\item \eqref{distr} holds true.
\end{itemize}
Thus, taking into account \eqref{lambdamu}, we conclude that
\begin{gather}\label{lambdamu+}
\sigma_{\mathrm{p}}(\mathcal{A}_{qr})\setminus\{q\}=\{\lambda_k^-,k=1,2,3...\}\cup\{\lambda_k^+,k=1,2,3...\}.
\end{gather}

Since $\lambda_k^+\nearrow q$ as $k\to\infty$ then $q\in\sigma_{\mathrm{ess}}(\mathcal{A}_{qr})$. 

It remains to prove that if $\lambda\notin  \mathcal{S}=: 
\left(\cupl_{k\in\mathbb{N}}\{\lambda_k^-\}\right)\cup\left(\cupl_{k\in\mathbb{N}}
\{\lambda_k^+\}\right)$ and $\lambda\not=q$ then $\lambda$ belongs to the resolvent set of $\mathcal{A}_{qr}$.
Namely, we have to show that 
the problem
\begin{gather}\label{resolvent}
\mathcal{A}_{qr}V-\lambda V=F
\end{gather}
has a solution $V$ for an arbitrary $F=(f_1,f_2)\in\mathcal{H}$.

For  $\nu\in\mathbb{R}$ we introduce the operator $\widehat{\mathcal{A}}^{\nu}$ ($\nu\in\mathbb{R}$) acting in $L_2(\Omega)\oplus L_2(\Gamma)$ and generated by the sesquilinear form
\begin{gather*}
\widehat{\eta}^{\nu}[U,V]=\intl_{\Omega}\nabla u_1\cdot\nabla \overline{v_1} \d x-
\nu\intl_\Gamma u_1 \overline{v_1}\d s,\quad \mathrm{dom}(\widehat{\eta}^{\nu})=\left\{U=(u_1,u_2)\in H^1(\Omega)\oplus L_2(\Gamma):\ u_1|_\Gamma=u_2\right\}.
\end{gather*} 
The resolvent equation $\widehat{\mathcal{A}}_{\nu} U-\lambda U=G$ 
(where $U=\left(u,u|_\Gamma\right)$, $G=\left(g_1,g_2\right)$) formally can be written as follows:
\begin{gather*}
\begin{cases}
-\Delta u-\lambda u=g_1&\text{ in }\Omega,\\ 
{\partial u\over\partial  n}-\nu u -\lambda u=  g_2&\text{ on }\Gamma,\\{\partial u\over\partial n}=0&\text{ on }\partial\Omega\setminus \Gamma.
\end{cases}
\end{gather*}
The spectrum of $\widehat{\mathcal{A}}^{\nu}$ is purely discrete.

Let us consider the set
$$\widehat{\mathcal{S}}:=\left\{\lambda\in\mathbb{C}:\ \lambda\text{ is an eigenvalue of }\widehat{\mathcal{A}}_{\nu},\text{ where }\nu={qr\lambda\over q-\lambda}-\lambda\right\}.$$
Evidently, if $\lambda\not\in \widehat{\mathcal{S}}\cup\{q\}$ then for an arbitrary $G\in L_2(\Omega)\oplus L_2(\Gamma)$
the problem 
\begin{gather}\label{mures}
\widehat{\mathcal{A}}_{\nu}U-\lambda U=G,\text{ where }\nu={qr\lambda\over q-\lambda}-\lambda
\end{gather}
has a solution.

One can easily see that $\lambda$ belongs to $\widehat{\mathcal{S}}$ if and only if
$\lambda$ is an eigenvalue of the operator $\mathcal{A}^\mu$, where $\mu={qr\lambda\over q-\lambda}$. Using this and \eqref{lambdamu}-\eqref{lambdamu+} we conclude that 
$\widehat{\mathcal{S}}=\mathcal{S}$. 

Thus, if $\lambda\not\in \mathcal{S}\cup\{q\}$ then for an arbitrary 
$G=(g_1,g_2)\in L_2(\Omega)\oplus L_2(\Gamma)$ the problem \eqref{mures} has a 
solution $U=(u_1,u_2)$ (recall, that $u_1|_\Gamma=u_2$).
Then we take $g_1:=f_1$, $g_2:={qrf_2\over q-\lambda}$. It is straightforward to show that 
$V:=(u_1,{q\over q-\lambda}u_2)$ is a solution of  \eqref{resolvent}.

Obviously, all eigenvalues $\lambda_k^\pm$ have finite multiplicity and are isolated 
points of $\sigma(\mathcal{A}_{qr})$, whence 
$\sigma_{\mathrm{p}}(\mathcal{A}_{qr})\setminus\{q\}=
\sigma_{\mathrm{disc}}(\mathcal{A}_{qr})$.

Lemma \ref{lm1} is proved.

\subsection{\label{subsec23} Proof of Theorem \ref{th1}: the case $q=\infty $}

We prove the property \eqref{ah} of the Hausdorff convergence. 
Let $\lambda\e\in\sigma(\mathcal{A}\e)$ and $\lambda\e\to \lambda$ as $\eps\to 0$, we have to show that either 
\begin{gather}\label{HausA2}
\lambda\in\sigma(\mathcal{A}_r)\text{ if }r>0\text{\quad or\quad  }\lambda\in\sigma(\mathcal{A})\text{ if }r=0.
\end{gather}

Again by $u\e$ we denote the eigenfunction corresponding to $\lambda\e$ and satisfying \eqref{normalization}.
In the same way as in the case $q<\infty$ we conclude that  there is a subsequence (still denoted by $\eps$) and $u_1\in H^1(\Omega)$, $u_2\in L_2(\Gamma)$ such that \eqref{conv1}-\eqref{conv4} hold true. 

It is not hard to prove, using the trace inequality and the Poincar\'e inequality, 
the following estimate:
\begin{gather}\label{trP}
\|u-\<u\>_{\Gamma_i\e}\|^2_{L_2(\Gamma_i\e)}\leq C\eps \|\nabla u\|^2_{L_2(Y_i\e)},\ \forall u\in H^1(Y_i\e).
\end{gather}
Then, using \eqref{trP} and Lemmata \ref{lm2}-\ref{lm3}, we obtain:
\begin{multline}\label{u1u2+}
\liml_{\eps\to 0}\|\sqrt{r\e}u\e-\Pi\e u\e\|^2_{L_2(\Gamma)}=
\liml_{\eps\to 0}\suml_{i\in\I}\intl_{\Gamma_i\e}\left|\sqrt{r\e}u\e-\sqrt{r\e}\langle u\e\rangle_{B_i\e}\right|^2\d x\\\leq
C\liml_{\eps\to 0}\left(r\e\suml_{i\in\I}\intl_{\Gamma_i\e}\left|u\e-\langle u\e\rangle_{\Gamma_i\e}\right|^2\d x
+r\e\eps^{n-1}\suml_{i\in\I}\left|\langle u\e\rangle_{\Gamma_i\e}-\langle u\e\rangle_{\checkD}\right|^2\d x\right.\\+\left.
r\e\eps^{n-1}\suml_{i\in\I}\left|\langle u\e\rangle_{\checkD}-\langle u\e\rangle_{\hatD}\right|^2\d x+r\e\eps^{n-1}\suml_{i\in\I}\left|\langle u\e\rangle_{\hatD}-\langle u\e\rangle_{B_i\e}\right|^2\d x\right)\\\leq
C_1\liml_{\eps\to 0}\left(r\e\eps\suml_{i\in \I}\|\nabla u\e\|_{L_2(Y_i\e)}^2+
r\e\eps^{n-1}\mathbf{D}\e\suml_{i\in \I}\|\nabla u\e\|_{L_2(Y_i\e\cup B_i\e)}^2+
(q\e)^{-1}\suml_{i\in \I}\|\nabla u\e\|_{L_2(T_i\e)}^2\right)= 0,
\end{multline}
whence, in view of \eqref{conv3}-\eqref{conv4}, 
\begin{gather}\label{u1u2}
u_2=r^{1/2}u_1\text{ on }\Gamma. 
\end{gather}
Also one has, using the equality $\|\Pi\e u\e\|^2_{L_2(\Gamma)}=\suml_{i\in\I}\varrho\e |B_i\e|\cdot|\<u\e\>_{B_i\e}|^2$, 
\begin{gather}\label{1=}
1=\|u\e\|^2_{L_2(\Omega)}+
\suml_{i\in\I}\|u\e\|^2_{L_2(T_i\e)}+\|\Pi\e u\e\|^2_{L_2(\Gamma)}
+\suml_{i\in\I}\intl_{B_i\e}\varrho\e\left|u\e-\langle u\e\rangle_{B_i\e}\right|^2 \d x.
\end{gather}
Here the second term tends to zero in view of Lemma \ref{lm4}, the last term tends to zero in view of \eqref{uB} (the validity of \eqref{uB} is independent of either $q$ is finite or infinite). Thus, taking into account \eqref{u1u2}, we obtain from  \eqref{1=}:
\begin{gather*}
1=\|u_1\|^2_{L_2(\Omega)}+r\|u_1\|^2_{L_2(\Gamma)},
\end{gather*}
whence, 
\begin{gather}\label{u0}
u_1\not=0.
\end{gather}

For an arbitrary $w\in H^1(\Omega\e)$ one has equality \eqref{int_eq}. This time we choose the test-function $w$ as follows:
\begin{gather}\label{we1}
w(x)=\widetilde w\e(x):=
\begin{cases}
w(x)+\ds\suml_{i\in \I}(w(x^{i,\eps})-w(x))\varphi\e(x),& x\in\Omega,\\
w(x^{i,\eps}),&x\in T_i\e\cup B_i\e.
\end{cases}
\end{gather}
Here $w\in C^\infty(\Omega)$ in an arbitrary function, the cut-off function $\varphi\e(x)$ is the same as in \eqref{we}. 

We plug $w=\widetilde w\e(x)$  into \eqref{int_eq} and pass to the limit as $\eps\to 0$. 
Using \eqref{conv1} we obtain
\begin{gather}\label{LHS+}
\intl_{\Omega\e}\nabla u\e \cdot\nabla \widetilde w\e\d x=\intl_{\Omega}\nabla u\e\cdot \nabla w \d x+\suml_{i\in\I}\intl_{Y_i\e}\nabla u\e\cdot\nabla \left(\left(w(x^{i,\eps})-w\right)\phi_i\e\right)\d x\underset{\eps\to 0}\to 
\intl_{\Omega}\nabla u \cdot\nabla w \d x
\end{gather}
(the second integral vanishes because of the same arguments as those ones in the proof of \eqref{Iomega2}).

Now, let us study the right-hand-side of \eqref{int_eq}. We have:
\begin{multline}\label{Jomega+}
\lambda\e\intl_{\Omega\e} u\e \widetilde w\e \rho\e \d x=\lambda\e\left(\intl_{\Omega}u\e w \d x+\suml_{i\in\I}\intl_{Y_i\e} u\e \left(w(x^{i,\eps})-w\right)\phi_i\e \d x\right.\\\left.+\suml_{i\in\I}\intl_{T_i\e}u\e w(x^{i,\eps}) \d x+{\varrho\e}\suml_{i\in\I}\intl_{B_i\e} u\e w(x^{i,\eps})\d x\right).
\end{multline}

One has, using \eqref{conv2},
\begin{gather*}
\intl_{\Omega\e}u\e w  \d x\to \intl_{\Omega}u_1 w \d x\text{ as }\eps\to 0.
\end{gather*}
The second integral in \eqref{Jomega+} vanishes (here we use the same arguments as in \eqref{Jomega2}), the third integral also tends to zero as $\eps\to 0$: 
\begin{gather*}
\left|\suml_{i\in\I}\intl_{T_i\e}u\e w(x^{i,\eps}) \d x\right|^2\leq 
\suml_{i\in \I}\|u\e\|^2_{L_2(T_i\e)}\suml_{i\in\I}|T_i\e|\leq Ch\e \|u\e\|^2_{\mathcal{H}\e} \to 0\text{ as }\eps\to 0.
\end{gather*}

It remains to study the behaviour of the last integral in \eqref{Jomega+}. One has:
\begin{gather*}
{\varrho\e}\suml_{i\in\I}\intl_{B_i\e} u\e w(x^{i,\eps})\d x=\varrho\e (b\e)^n |B|
\suml_{i\in\I}\langle u\e\rangle_{B_i\e}w(x^{i,\eps})=
\sqrt{r\e}\intl_\Gamma \Pi\e u\e Q\e w \d s\underset{\eps\to 0}\to \sqrt{r}\intl_{\Gamma}u_2 w \d s.
\end{gather*}

Thus, taking into account \eqref{u1u2}, we conclude that 
\begin{gather}\label{RHS+}
\liml_{\eps\to 0}\left(\lambda\e\intl_{\Omega} u\e w\e \rho\e \d x\right)=\lambda\left(\intl_{\Omega}u_1 w \d x+\intl_{\Gamma}u_1 w r \d s\right).
\end{gather}

Combining \eqref{int_eq},  \eqref{LHS+} and \eqref{RHS+} we obtain:
\begin{gather}\label{int_eq_final+}
\intl_\Omega\nabla u_1\cdot\nabla w \d x= \lambda\left(\intl_{\Omega}u_1 w \d x+{r}\intl_{\Gamma}u_1 w \d s\right).
\end{gather}
Since $u_1\not=0$ then it follows easily from \eqref{int_eq_final+} that  
either $\lambda$ is the eigenvalue of $\mathcal{A}_r$ if $r>0$ or 
$\lambda$ is the eigenvalue of $\mathcal{A}$ if $r=0$. 
Thus the property \eqref{ah} of the Hausdorff convergence is proved.

The property \eqref{bh} of the Hausdorff convergence is proved in the same way as that one for the case $q<\infty$ (using the test-function $w=\tilde{w}\e(x)$ defined below by \eqref{we1}  instead of $w=w\e(x)$ defined by \eqref{we}). 

Theorem \ref{th1} is proved.

\section*{Acknowledgements}
\addcontentsline{toc}{section}{Acknowledgements}

G.C. is a member of GNAMPA of INdAM. The work of A.K. is supported by the German
Research Foundation (DFG) through Research Training Group 1294
"Analysis, Simulation and Design of Nanotechnological Processes".
Part of this work was done during the visit of A.K. to the Department of Engineering of University of Sannio (Benevento, Italy) and was partially supported by the project "Spectral theory and asymptotic analysis", FRA 2013. 

\addcontentsline{toc}{section}{References}

\end{document}